



\documentclass[12pt, reqno]{amsart}
\usepackage[margin=1.0in]{geometry}


\usepackage{amsfonts}
\usepackage{amsmath}
\usepackage{amssymb}
\usepackage{setspace}
\usepackage{graphicx}
\usepackage{amscd,amssymb,amsthm}

\usepackage{hyperref}
\usepackage{graphicx}
\usepackage{amsmath, amsthm, amscd, amsfonts, amssymb, graphicx, color}
 \usepackage{url}
\usepackage{enumerate}
 \makeatletter
\let\reftagform@=\tagform@
\def\tagform@#1{\maketag@@@{(\ignorespaces\textcolor{blue}{#1}\unskip\@@italiccorr)}}
\renewcommand{\eqref}[1]{\textup{\reftagform@{\ref{#1}}}}
\makeatother
\usepackage{hyperref}
\hypersetup{colorlinks=true, linkcolor=red, anchorcolor=green,
citecolor=cyan, urlcolor=red, filecolor=magenta, pdftoolbar=true}

\usepackage{epsfig}        
\usepackage{epic,eepic}       

\setcounter{MaxMatrixCols}{10}

\newtheorem{theorem}{Theorem}
\theoremstyle{plain}

\newtheorem{corollary}{Corollary}

\newtheorem{lemma}{Lemma}

\newtheorem{remark}{Remark}

\numberwithin{equation}{section}

 \DeclareMathOperator{\osc}{Osc}
  

\begin{document}

\title[Two-Point Quadrature Rules for the Riemann--Stieltjes Integral]{ Two-Point Quadrature Rules for  Riemann--Stieltjes Integrals with  $L^p$--error estimates }

\author[M.W. Alomari]{M.W. Alomari}

\address{Department of Mathematics, Faculty of Science and
Information Technology, Irbid National University, 2600 Irbid
21110, Jordan.} \email{mwomath@gmail.com}
\date{\today}
\subjclass[2010]{41A55, 65D30, 65D32.}

\keywords{Quadrature  formula, Riemann-Stieltjes integral,
Ostrowski's inequality}

\begin{abstract}In this work, we construct a new general two-point quadrature rules for the Riemann--Stieltjes integral $\int_a^b {f\left( t \right)du\left( t
\right)}$, where the integrand $f$ is assumed to be satisfied with
the H\"{o}lder condition on $[a,b]$ and the integrator $u$ is of
bounded variation on $[a,b]$. The dual formulas under the same
assumption are proved. Some sharp error $L^p$--Error estimates for
the proposed quadrature rules are also obtained.
\end{abstract}

\maketitle

\section{Introduction}

The number of proposed quadrature rules that provides
approximation for the  Riemann--Stieltjes integral  ($
\mathcal{RS}$--integral) $\int_a^b {f\left( t
    \right)du\left( t \right)}$  using derivatives or without using
derivatives are very rare in comparison with the large number of
methods available to approximate the classical Riemann integral $
\int_{a}^{b}{f\left( t\right) dt}$.

The problem of introducing quadrature rules for
$\mathcal{RS}$-integral $\int_a^b {fdg}$ was studied via theory of
inequalities by many authors. Two famous real inequalities were
used in this approach, which are the well known Ostrowski  and
Hermite-Hadamard inequalities and their modifications. For this
purpose and in order to approximate the $\mathcal{RS}$-integral
$\int_a^b {f\left( t \right)du\left( t \right)}$,   a
generalization of closed Newton-Cotes quadrature rules of
$\mathcal{RS}$-integrals without using derivatives provides a
simple and robust solution to a significant problem in the
evaluation of certain applied probability models was presented by
Tortorella in \cite{Tortorella}.

In 2000, Dragomir \cite{Dragomir2} introduced the Ostrowski's
approximation formula (which is of One-point type formula) as
follows:
\begin{align*}
\int_a^b {f\left( t \right)du\left( t \right)}\cong f\left( x
\right) \left[ {u\left( {b} \right) - u\left( a \right)} \right]
\qquad \forall x\in [a,b].
\end{align*}
Several error estimations for this approximation had been done in
the works \cite{Dragomir1} and \cite{Dragomir2}.

From different point of view, the authors of \cite{Dragomir5} (see
also \cite{Barnett,Barnett1})  considered the problem of
approximating the Stieltjes integral $\int_a^b {f\left( t
    \right)du\left( t \right)}$ via the generalized trapezoid formula:
\begin{align*}
\int_a^b {f\left( t \right)du\left( t \right)}\cong \left[
{u\left( x \right) - u\left( a \right)} \right]f\left( a \right) +
\left[ {u\left( b \right) - u\left( x \right)} \right]f\left( b
\right).
\end{align*}
Many authors have studied this quadrature rule under various
assumptions of integrands and integrators. For full history of
these two quadratures see \cite{alomari3} and the references
therein.

Another trapezoid type formula was considered in \cite{Dragomir8},
which reads:
\begin{align*}
\int_a^b {f\left( t \right)du\left( t \right)}\cong \frac{f\left(
    a \right) + f\left( b \right)}{2}\left[ {u\left( {b} \right) -
    u\left( a \right)} \right] \qquad \forall x\in [a,b].
\end{align*}
Some related results had been presented by the same author in
\cite{Dragomir6} and \cite{Dragomir7}. For other connected results
see \cite{CeroneDragomir} and \cite{CeroneDragomir1}.

In 2008, Mercer \cite{Mercer}  introduced the following trapezoid
type formula for the $\mathcal{RS}$-integral
\begin{align}
\label{Mercer.Q}\int_a^b {fdg} \cong \left[ {G - g\left( a
    \right)} \right]f\left( a \right) + \left[ {g\left( b \right) - G}
\right]f\left( b \right),
\end{align}
where $G= \frac{1}{{b  -a}}\int_a^b  {g\left( t \right)dt}$.

Recently, Alomari and Dragomir \cite{alomari1}, proved several new
error bounds for the Mercer--Trapezoid quadrature rule
(\ref{Mercer.Q}) for the $\mathcal{RS}$-integral under various
assumptions involved the integrand $f$ and the integrator $g$.
\newline

Follows Mercer approach in \cite{Mercer}, Alomari and Dragomir
\cite{alomari6} introduced the following three-point quadrature
formula:
\begin{align}
\label{error.term} \int_a^b {f\left( t \right)dg \left( t \right)}
&\cong \left[ {G\left( {a,x} \right) - g\left( a \right)}
\right]f\left( a \right) + \left[ {G\left( {x,b} \right) - G\left(
    {a,x} \right)} \right]f\left( x \right)
\nonumber\\
&\qquad+ \left[ {g\left( b \right) - G\left( {x,b} \right)}
\right]f\left( b \right)
\end{align}
for all $a<x<b$, where $G\left( {\alpha ,\beta } \right): =
\frac{1}{{\beta  - \alpha }}\int_\alpha ^\beta  {g\left( t
    \right)dt}$.

\noindent Several error estimations of Mercer's type quadrature
rules for $\mathcal{RS}$-integral under various assumptions about
the function involved have been considered in \cite{alomari1} and
\cite{alomari4}.

Motivated by Guessab-Schmeisser inequality (see \cite{Guessab})
which is of Ostrowski's type, Alomari in \cite{alomari2} and
\cite{alomari5} presented the following approximation formula for
$\mathcal{RS}$-integrals:
\begin{align}
\int_a^b {f\left( t \right)du\left( t \right)} \cong\left[
{u\left( {\frac{{a + b}}{2}} \right) - u\left( a
    \right)} \right]f\left( x \right) + \left[ {u\left( b \right) -
    u\left( {\frac{{a + b}}{2}} \right)} \right]f\left( {a + b - x}
\right),
\end{align}
for all $x \in \left[ {a,\frac{{a + b}}{2}}\right]$. For other
related results see \cite{alomari3}. For different approaches
variant quadrature formulae the reader may refer to \cite{AG},
\cite{M}, \cite{Gautschi} and  \cite{Munteanu}.

Among others the $L^{\infty}$-norm gives the highest possible
degree of precision; so that it is recommended to be `almost' the
norm of choice.  However, in some cases we cannot access the
$L^{\infty}$-norm, so that  $L^p$-norm  ($1\le p < \infty$) is
considered to be a variant norm  in error estimations.

In this work, several $L^p$-error estimates ($1\le p < \infty$) of
general two and three points quadrature rules for
Riemann-Stieltjes integrals are presented. The presented proofs
depend on new triangle type inequalities for
$\mathcal{RS}$-integrals.

 Let $f$ be defined on $[a,b]$. If $P :=
\left\{ {x_0, x_1, \cdots, x_n} \right\}$ is a partition of
$[a,b]$, write
$$\Delta f_i  = f\left( {x_i } \right) - f\left( {x_{i - 1} }
\right),$$ for $i=1,2, \cdots, n$. A function $f$ is said to be of
bounded $p$-variation  if there exists a positive number $M$ such
that $\left( {\sum\limits_{i = 1}^n {\left| {\Delta f_i }
\right|^p} } \right)^{\frac{1}{p}}  \le M$, $(1 \le p < \infty)$
for all partition of $[a,b]$, (see \cite{Kolyada}).

Let $f$ be of bounded $p$-variation on $[a,b]$, and let $\sum (P)$
denote the sum $\left( {\sum\limits_{i = 1}^n {\left| {\Delta f_i
} \right|^p} } \right)^{\frac{1}{p}} $ corresponding to the
partition $P$ of $[a,b]$. The number
\begin{align*}
\bigvee_{a}^{b}(f;p) = \sup\left\{ {\sum (P): P \in\mathcal{P}
\left( \left[ a,b \right]  \right)} \right\},\,\,\,\,\,\,\,\,\,\,1
\le p < \infty
\end{align*}
is called the total $p$--variation of $f$ on the interval $[a,b]$,
where $\mathcal{P}{\left( \left[ a,b \right]  \right) }$ denotes
the set of all partitions of $[a,b]$. For $p = 1$ it is the usual
variation of $f(x)$ that was introduced by Jordan (see
\cite{Golubov1}, \cite{Golubov2}). For very constructive
systematic study of Jordan variation we recommend the interested
reader to refer to \cite{Natanson}.

In special case, we define the variation of order $\infty$ of $f$
  along $[a,b]$ in the classical sense,
i.e., if there exists a positive number $M$ such that
\begin{align*}
\sum\limits_{i = 1}^n {  \osc\left( {f;\left[ {x_{i-1}^{\left( n
\right)} ,x_{i}^{\left( n \right)} } \right]}
\right)}=\sum\limits_{i = 1}^n {\left({  \sup  -  \inf
  }\right)  f\left( t_i \right) } \le M, \qquad  t_i  \in \left[ {x_{i-1}^{\left( n \right)} ,x_{i}^{\left( n
\right)} } \right],
\end{align*}
for all partition of $[a,b]$, then $f$ is said to be of bounded
$\infty$--variation on $[a,b]$. The number
\begin{align*}
\bigvee_a^b(f;\infty) = \sup\left\{ {\sum (P): P \in
\mathcal{P}{[a,b]}} \right\}:=\osc\left( {f;\left[ {a,b} \right]}
\right),
\end{align*}
is called the oscillation of $f$ on $[a,b]$. Equivalently, we may
define the oscillation of $f$ as, (see \cite{Dudley}):
\begin{align*}
\bigvee_a^b(f;\infty)= \mathop {\lim }\limits_{p \to \infty
}\bigvee_a^b(f;p)&= \mathop {\sup }\limits_{x \in \left[ {a,b}
\right]} \left\{ {f\left( x \right)} \right\} - \mathop {\inf
}\limits_{x \in \left[ {a,b} \right]} \left\{ {f\left( x \right)}
\right\}
\\
&= \osc\left( {f;\left[ {a,b} \right]} \right).
\end{align*}

Let $\mathcal{W}_p$ denotes the class of all functions of bounded
$p$-variation $(1\le p \le \infty)$. For an arbitrary $p \ge 1$
the class $\mathcal{W}_p$ was firstly introduced by Wiener in
\cite{Wiener}, where he had shown that $\mathcal{W}_p$ can only
have discontinuities of the first kind. More generally, if $f$ is
a real function of bounded $p$-variation on an interval $[a,b]$,
then:
\begin{itemize}
\item $f$ is bounded, and
\begin{align*}
\osc\left( {f;\left[ {a,b} \right]} \right) \le
\bigvee_a^b(f;p)\le \bigvee_a^b(f;1).
\end{align*}
This fact follows by Jensen's inequality applied for
$h(p)=\bigvee_a^b(f;p)$ which is log-convex and decreasing for all
$p>1$. Moreover,   the
 inclusions
\begin{align*}
\mathcal{W}_{\infty}(f) \subset \mathcal{W}_q(f) \subset
\mathcal{W}_p(f)\subset \mathcal{W}_1(f)
\end{align*}
are valid for all $1 < p < q < \infty$, (see \cite{Young}).

\item $f$ is continuous except at most on a countable set.

\item $f$ has one-sided limits everywhere (limits from the left
everywhere in $(a,b]$, and from the right everywhere in $[a,b)$;

\item The derivative $f'(x)$ exists almost everywhere (i.e. except
for a set of measure zero).

\item If $f\left(x\right)$ is differentiable on $\left[ a,b
\right]$, then
\begin{align*}
\bigvee_a^b\left( {f;p} \right)  = \left( {\int_a^b {\left|
{f^{\prime}\left( t \right)} \right|^p dt} } \right)^{
\frac{1}{p}}=\left\| f^{\prime} \right\|_p, \qquad   1\le p <
\infty.
\end{align*}
\end{itemize}
 \begin{lemma}\cite{A}
 \label{lemma1}
\emph{Fix $1 \le p < \infty$. Let $f,g : [a,b] \to \mathbb{R}$ be
such that $f$ is continuous on $[a,b]$ and $g$ is of bounded
$p$--variation on $[a,b]$. Then the Riemann--Stieltjes integral
$\int_a^b {f\left( t \right)dg\left( t \right)}$ exists and the
inequality:
\begin{align}
\label{eq1.4}\left| {\int_a^b {w\left( t \right)d\nu\left( t
\right)} } \right| \le \left\|{w}\right\|_{\infty}\cdot \osc\left(
{\nu;\left[ {a,b} \right]} \right)\le
\left\|{w}\right\|_{\infty}\cdot\bigvee_a^b\left( {\nu;p} \right),
\end{align}
holds.  The constant $`1$' in the both inequalities is the best
possible. }
\end{lemma}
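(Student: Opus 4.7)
I plan to prove the lemma in three stages: existence of the integral, the Riemann--Stieltjes-sum estimate giving the first inequality, and the variational bound together with sharpness.

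\emph{Existence.} Each bullet recorded earlier in the excerpt applies to $\nu$: it is bounded, its set of discontinuities is at most countable and consists only of jumps of the first kind, and one-sided limits exist everywhere on $[a,b]$. Since $w$ is continuous on $[a,b]$, the discontinuity sets of $w$ and $\nu$ are disjoint, so by the classical Moore--Pollard--Stieltjes criterion the integral $\int_a^b w(t)\,d\nu(t)$ exists.

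\emph{First inequality.} Pick an arbitrary partition $P = \{a = x_0 < x_1 < \cdots < x_n = b\}$ with tags $\xi_i \in [x_{i-1}, x_i]$, and form the Riemann--Stieltjes sum
$$\sigma_P = \sum_{i=1}^n w(\xi_i)\bigl(\nu(x_i) - \nu(x_{i-1})\bigr).$$
The plan is to bound each factor $|w(\xi_i)|$ by the uniform $\|w\|_\infty$ and to absorb the remaining sum of increments into a single global oscillation. This requires a careful reorganisation of $\sigma_P$ so that the swing of $\nu$ is picked up only once rather than once per subinterval. An Abel-type rearrangement
$$\sigma_P = w(\xi_n)\bigl(\nu(b) - \nu(a)\bigr) - \sum_{i=1}^{n-1}\bigl(w(\xi_{i+1}) - w(\xi_i)\bigr)\bigl(\nu(x_i) - \nu(a)\bigr),$$
combined with the pointwise bound $|\nu(x_i) - \nu(a)| \le \osc(\nu;[a,b])$ and the uniform continuity of $w$ (which controls the telescoping contribution as $\|P\|\to 0$), delivers $|\sigma_P| \le \|w\|_\infty \cdot \osc(\nu;[a,b])$. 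Passing to the limit $\|P\|\to 0$, justified by existence from the previous step, yields the first inequality.

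\emph{Second inequality and sharpness.} The bound $\osc(\nu;[a,b]) \le \bigvee_a^b(\nu;p)$ is recorded as the first item of the bulleted list in the excerpt (following from Jensen's inequality and the log-convexity / monotonicity of $h(p) = \bigvee_a^b(\nu;p)$), and composes with the previous inequality to complete the chain. To show $1$ is the sharp constant in both inequalities simultaneously, take $w \equiv 1$ and $\nu(t) = t$: then $\int_a^b 1\,d\nu = b - a = \osc(\nu;[a,b]) = \bigvee_a^b(\nu;p)$ for every $p \ge 1$, producing equality throughout.

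\emph{Main obstacle.} The decisive step is the first inequality's transition from per-interval increments to the global oscillation: the naive estimate $|\sigma_P| \le \|w\|_\infty \sum_i |\nu(x_i) - \nu(x_{i-1})|$ would produce the $1$-variation on the right, which may be strictly larger than $\osc(\nu;[a,b])$. The Abel-type rearrangement above is what isolates the global swing of $\nu$; making the error from the telescoping correction vanish in the limit $\|P\|\to 0$, using only continuity of $w$ and boundedness of $\nu$, is the most delicate technical point of the proof.
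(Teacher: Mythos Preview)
Your Abel-rearrangement step does not deliver what you claim. From
\[
\sigma_P = w(\xi_n)\bigl(\nu(b)-\nu(a)\bigr) - \sum_{i=1}^{n-1}\bigl(w(\xi_{i+1})-w(\xi_i)\bigr)\bigl(\nu(x_i)-\nu(a)\bigr)
\]
and $|\nu(x_i)-\nu(a)|\le\osc(\nu;[a,b])$ you obtain
\[
|\sigma_P|\le\osc(\nu;[a,b])\Bigl(|w(\xi_n)|+\sum_{i=1}^{n-1}|w(\xi_{i+1})-w(\xi_i)|\Bigr),
\]
and you then assert that uniform continuity of $w$ ``controls the telescoping contribution as $\|P\|\to 0$''. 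It does not: uniform continuity makes each individual difference small, but the number of terms grows, and the sum need not tend to zero. Already for monotone $w$ the sum telescopes to $|w(\xi_n)-w(\xi_1)|\to|w(b)-w(a)|$; in general it can be as large as the total variation $\bigvee_a^b(w;1)$, which may be infinite for a merely continuous $w$. Thus your limiting estimate is at best $\osc(\nu;[a,b])\bigl(\|w\|_\infty+\bigvee_a^b(w;1)\bigr)$, not $\|w\|_\infty\cdot\osc(\nu;[a,b])$.

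This gap cannot be repaired, because with the paper's identification $\osc(\nu;[a,b])=\sup\nu-\inf\nu$ the first inequality in \eqref{eq1.4} is in fact false. On $[0,2]$ take the piecewise-linear tent $\nu(0)=0$, $\nu(1)=1$, $\nu(2)=0$, so that $\osc(\nu)=1$ and $\bigvee_0^2(\nu;p)=2^{1/p}$; take $w$ continuous with $w\approx 1$ on $[0,1]$ and $w\approx -1$ on $[1,2]$, so $\|w\|_\infty=1$. Then $\int_0^2 w\,d\nu$ can be made arbitrarily close to $2$, which exceeds both $\|w\|_\infty\cdot\osc(\nu)=1$ and, for every $p>1$, $\|w\|_\infty\cdot\bigvee_0^2(\nu;p)=2^{1/p}<2$. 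The paper itself gives no proof of Lemma~\ref{lemma1} but imports it from \cite{A}; only the classical bound with the ordinary total variation $\bigvee_a^b(\nu;1)$ is valid in this generality.
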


\begin{lemma}\cite{A}\label{lemma2}
Let $1 \le p < \infty$. Let $w,\nu : [a,b] \to \mathbb{R}$ be such
that is $w \in L^p[a,b]$ and $\nu$ has a Lipschitz property on
$[a,b]$. Then the inequality
\begin{align}
\label{eq1.5}\left| {\int_a^b {w\left( t \right)d\nu\left( t
\right)} } \right| \le L \left( {b-a} \right)^{1 - {\textstyle{1
\over p}}}\cdot  \left\| w \right\|_p,
\end{align}
holds and the constant $`1$' in the right hand side is the best
possible, where
\begin{align*}
\left\| w \right\|_p  = \left( {\int_a^b {\left| {w\left( t
\right)} \right|^p dt} } \right)^{1/p}, \qquad (1\le p < \infty).
\end{align*}
\end{lemma}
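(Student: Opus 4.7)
The plan is to reduce the Riemann--Stieltjes integral to an ordinary Lebesgue integral using the absolute continuity of Lipschitz functions, and then to apply H\"older's inequality to bound the resulting product.

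First, I would use the fact that every Lipschitz function is absolutely continuous, so $\nu$ is differentiable almost everywhere on $[a,b]$ with $|\nu'(t)|\le L$ a.e., and the Lebesgue--Stieltjes measure induced by $\nu$ is absolutely continuous with respect to Lebesgue measure with Radon--Nikodym derivative $\nu'$. Consequently, the integral $\int_a^b w(t)\,d\nu(t)$ may be rewritten as
$$\int_a^b w(t)\,d\nu(t)=\int_a^b w(t)\nu'(t)\,dt,$$
so that the triangle inequality and the pointwise bound $|\nu'|\le L$ yield
$$\left|\int_a^b w(t)\,d\nu(t)\right|\le \int_a^b |w(t)||\nu'(t)|\,dt\le L\int_a^b |w(t)|\,dt.$$

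Next, I would apply H\"older's inequality with conjugate exponents $p$ and $q=p/(p-1)$ (with the usual convention $q=\infty$ when $p=1$) to the product $|w(t)|\cdot 1$:
$$\int_a^b |w(t)|\,dt\le \|w\|_p\cdot\|1\|_q = (b-a)^{1/q}\|w\|_p = (b-a)^{1-1/p}\|w\|_p.$$
Chaining this with the previous estimate produces the desired inequality \eqref{eq1.5}. To verify the sharpness of the constant $1$, I would exhibit the extremal pair $\nu(t)=L(t-a)$ and $w(t)\equiv 1$: the left-hand side equals $L(b-a)$, while the right-hand side equals $L(b-a)^{1-1/p}(b-a)^{1/p}=L(b-a)$, so equality is attained and no smaller constant can replace $1$.

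The only delicate point I foresee is justifying the identification of the Riemann--Stieltjes integral with the Lebesgue integral $\int w\nu'\,dt$ when $w$ is merely in $L^p$ rather than continuous. In the continuous case this is a classical fact, and it extends to $L^p$ via the absolute continuity of the Stieltjes measure $d\nu$ with respect to Lebesgue measure; once the identification is in place, the remainder is a one-line application of H\"older. Everything else is standard manipulation and requires no further ingredients beyond those recalled in the preceding discussion.
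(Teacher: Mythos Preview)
Your argument is correct. Note, however, that the paper does not supply its own proof of this lemma: it is quoted from \cite{A} and used as a black box, so there is no in-paper proof to compare against. Your reduction via absolute continuity of Lipschitz $\nu$ (giving $d\nu=\nu'\,dt$ with $|\nu'|\le L$ a.e.) followed by H\"older's inequality is the standard route and yields \eqref{eq1.5} cleanly; the extremal pair $\nu(t)=L(t-a)$, $w\equiv 1$ indeed certifies that the constant $1$ is best possible. The caveat you flag about interpreting $\int_a^b w\,d\nu$ for $w\in L^p$ rather than continuous $w$ is the only genuine subtlety, and your resolution---treating the integral as $\int_a^b w\nu'\,dt$ via the absolute continuity of the Stieltjes measure---is the correct one.
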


In this paper, we establish  two--point of Ostrowski's integral
inequality for the Riemann-Stieltjes integral $\int_a^b {f\left( t
\right)du\left( t \right)}$, where $f$ is assumed to be of
$r$-$H$-H\"{o}lder type on $[a,b]$ and $u$ is of bounded variation
on $[a,b]$, are given.  The dual formulas under the same
assumption are proved. Some sharp error $L^p$--Error estimates for
the proposed quadrature rules are also obtained.

\section{The Results}
Consider the quadrature rule
\begin{align}
\int_a^b{f\left( s \right)du\left( s \right)}=
\mathcal{Q}^{[a,b]}\left(f,u;t_0,x,t_1\right) +
\mathcal{R}^{[a,b]}\left(f,u;t_0,x,t_1\right)
\end{align}
where $\mathcal{Q}^{[a,b]}\left(f,u;t_0,x,t_1\right)$ is the
quadrature formula
\begin{align}
\mathcal{Q}^{[a,b]}\left(f,u;t_0,x,t_1\right)=\left[ {u\left( x
\right) - u\left( a \right)} \right]f\left( {t_0 } \right) +
\left[ {u\left( b \right) -
    u\left( x \right)} \right]f\left( {t_1 } \right)
\end{align}
for all $a\le t_0 \le x \le t_1  \le b$.

Hence, the remainder term
$\mathcal{R}^{[a,b]}\left(f,u;t_0,x,t_1\right)$ is given by
\begin{align}
\mathcal{R}^{[a,b]}\left(f,u;t_0,x,t_1\right):=  \int_a^b {f\left(
s \right)du\left( s \right)}  - \left[ {u\left( x \right) -
u\left( a \right)}
    \right]f\left( {t_0 } \right) - \left[ {u\left( b \right) -
        u\left( x \right)} \right]f\left( {t_1 } \right)
\end{align}
The following Two-point Ostrowski's inequality for
Riemann-Stieltjes integral holds.
\begin{theorem}\label{thm1}
Let $f:[a,b]\rightarrow \mathbb{R}$  be H\"{o}lder continuous of
order $r$, $(0 < r  \le 1)$, and $u:[a,b]\rightarrow \mathbb{R}$
is a mapping of bounded $p$-variation $(1\le p \le \infty)$ on
$[a, b]$. Then we have the inequality
\begin{align}
&\left| {\mathcal{R}^{[a,b]}\left(f,u;t_0,x,t_1\right)} \right|
\label{eq2.4}
\\
&\le H \max \left\{ {\left[ {\frac{{x - a}}{2} + \left| {t_0 -
\frac{{a + x}}{2}} \right|} \right],\left[ {\frac{{b - x}}{2} +
\left| {t_1 - \frac{{x + b}}{2}} \right|} \right]} \right\}^r
\cdot \bigvee_a^b\left( {u;p} \right)\nonumber
\end{align}
for all  $a\le t_0 \le x \le t_1 \le b$. Furthermore, the first
half of each max-term is the best possible in the sense that it
cannot be replaced by a smaller one, for all $r \in (0,1]$.
\end{theorem}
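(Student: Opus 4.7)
The plan is to split the remainder into two Stieltjes integrals, apply Lemma~\ref{lemma1} to each piece, use the H\"older condition, and then consolidate via an elementary $\max$-identity. Concretely, I would first observe that
\begin{align*}
[u(x)-u(a)]f(t_0) = \int_a^x f(t_0)\,du(s),\qquad [u(b)-u(x)]f(t_1) = \int_x^b f(t_1)\,du(s),
\end{align*}
so that
\begin{align*}
\mathcal{R}^{[a,b]}(f,u;t_0,x,t_1) = \int_a^x [f(s)-f(t_0)]\,du(s) + \int_x^b [f(s)-f(t_1)]\,du(s).
\end{align*}

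Since $f$ is H\"older (hence continuous), both integrands are continuous on their respective intervals, so Lemma~\ref{lemma1} gives
\begin{align*}
\bigl|\mathcal{R}^{[a,b]}\bigr| \le \sup_{s\in[a,x]}|f(s)-f(t_0)|\cdot \bigvee_a^x(u;p) + \sup_{s\in[x,b]}|f(s)-f(t_1)|\cdot \bigvee_x^b(u;p).
\end{align*}
The $r$--H\"older bound $|f(s)-f(t_0)|\le H|s-t_0|^r$ together with $\max_{s\in[a,x]}|s-t_0| = \max\{t_0-a,\,x-t_0\}$ and the elementary identity $\max\{\alpha,\beta\} = \tfrac{\alpha+\beta}{2} + \tfrac{|\alpha-\beta|}{2}$ then rewrites the first supremum as $H\bigl[\tfrac{x-a}{2} + |t_0-\tfrac{a+x}{2}|\bigr]^{r}$, and the second analogously as $H\bigl[\tfrac{b-x}{2} + |t_1-\tfrac{x+b}{2}|\bigr]^{r}$.

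The step I expect to be most delicate is consolidating the two-term bound $M_1\bigvee_a^x(u;p) + M_2\bigvee_x^b(u;p)$ into the single factor $\max\{M_1,M_2\}\cdot\bigvee_a^b(u;p)$ appearing in \eqref{eq2.4}. For Jordan variation ($p=1$) the additivity $\bigvee_a^x + \bigvee_x^b = \bigvee_a^b$ makes this immediate. For general $p>1$ one has only the sub-additivity $\bigvee_a^x(u;p)^p + \bigvee_x^b(u;p)^p \le \bigvee_a^b(u;p)^p$, so H\"older's inequality on the two-term sum $M_1\bigvee_a^x + M_2\bigvee_x^b \le (M_1^{q} + M_2^{q})^{1/q}\bigvee_a^b(u;p)$ with $1/p+1/q=1$ must be invoked; the uniform-max form $\max\{M_1,M_2\}\cdot \bigvee_a^b(u;p)$ is then the limit $q\to\infty$ of this estimate, corresponding to the Jordan case.

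Finally, for the sharpness claim I would specialize to $t_0=\tfrac{a+x}{2}$ and $t_1=\tfrac{x+b}{2}$ so the absolute-value corrections vanish, reducing the bound to a multiple of $\max\{\tfrac{x-a}{2},\tfrac{b-x}{2}\}^{r}$. I would then exhibit an extremal pair, for instance $f(s)=|s-t_0|^{r}$ on $[a,x]$ extended by $|s-t_1|^{r}$ on $[x,b]$, paired with a step-function $u$ whose jumps are concentrated at the endpoints realising the maxima of $|s-t_j|$. A direct computation of both sides on this pair should force equality, establishing that the summands $\tfrac{x-a}{2}$ and $\tfrac{b-x}{2}$ cannot be replaced by smaller quantities for any $r\in(0,1]$.
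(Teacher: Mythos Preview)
Your derivation of the two--term bound
\[
\bigl|\mathcal{R}^{[a,b]}\bigr|\le M_1\bigvee_a^{x}(u;p)+M_2\bigvee_x^{b}(u;p),\qquad
M_1=H\Bigl[\tfrac{x-a}{2}+\bigl|t_0-\tfrac{a+x}{2}\bigr|\Bigr]^{r},\quad
M_2=H\Bigl[\tfrac{b-x}{2}+\bigl|t_1-\tfrac{x+b}{2}\bigr|\Bigr]^{r},
\]
is exactly what the paper does: the same splitting, the same appeal to Lemma~\ref{lemma1}, the same $\max\{\alpha,\beta\}=\tfrac{\alpha+\beta}{2}+\tfrac{|\alpha-\beta|}{2}$ rewriting. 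Two places, however, do not land as written.

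\textbf{Consolidation.} Your H\"older step gives $(M_1^{q}+M_2^{q})^{1/q}\,\bigvee_a^{b}(u;p)$, and you then let $q\to\infty$ to recover the $\max$. But $q\to\infty$ forces $p\to1$; for a \emph{fixed} $p>1$ one has $(M_1^{q}+M_2^{q})^{1/q}>\max\{M_1,M_2\}$, so this route does not deliver \eqref{eq2.4} as stated. The paper does none of this: it simply writes
\[
M_1\bigvee_a^{x}(u;p)+M_2\bigvee_x^{b}(u;p)\le \max\{M_1,M_2\}\Bigl[\bigvee_a^{x}(u;p)+\bigvee_x^{b}(u;p)\Bigr]\le \max\{M_1,M_2\}\cdot\bigvee_a^{b}(u;p),
\]
i.e.\ it uses $\bigvee_a^{x}(u;p)+\bigvee_x^{b}(u;p)\le\bigvee_a^{b}(u;p)$ directly. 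For $p=1$ this is the additivity of Jordan variation and is all that is needed; your instinct that the step is delicate for $p>1$ is in fact correct (only $\left[\bigvee_a^{x}(u;p)\right]^{p}+\left[\bigvee_x^{b}(u;p)\right]^{p}\le\left[\bigvee_a^{b}(u;p)\right]^{p}$ holds then), but the H\"older--limit detour does not repair it.

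\textbf{Sharpness.} Your piecewise candidate $f(s)=|s-t_0|^{r}$ on $[a,x]$ glued to $|s-t_1|^{r}$ on $[x,b]$ is continuous at $x$ only when $x-t_0=t_1-x$; with your choice $t_0=\tfrac{a+x}{2}$, $t_1=\tfrac{x+b}{2}$ that forces $x=\tfrac{a+b}{2}$, and otherwise $f$ is not H\"older on $[a,b]$ at all. The paper avoids this by working on $[0,1]$ with the single global function $f(t)=t^{r}$ and a step integrator $u$ having a unit jump at $0$ (respectively at $1$). A direct evaluation with $t_0=\tfrac{x}{2}$, $t_1=x=1$ (respectively $t_0=x=0$, $t_1=\tfrac12$) then forces the constant $\tfrac12$ in each half of the $\max$; no piecewise construction is needed.
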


\begin{proof}
Using the integration by parts formula for Riemann--Stieltjes
integral, we have
\begin{align*}
&\int_a^x {\left[ {f\left( {t_0 } \right) - f\left( s \right)}
\right]du\left( s \right)}  + \int_x^b {\left[ {f\left( {t_1 }
\right) - f\left( s \right)} \right]du\left( s \right)}
\\
&= \int_a^x {f\left( {t_0 } \right)du\left( s \right)}  + \int_x^b
{f\left( {t_1 } \right)du\left( s \right)}  - \int_a^b {f\left( s
\right)du\left( s \right)}
\\
&=\left[ {u\left( x \right) - u\left( a \right)} \right]f\left(
{t_0 } \right) + \left[ {u\left( b \right) - u\left( x \right)}
\right]f\left( {t_1 } \right) - \int_a^b {f\left( s
\right)du\left( s \right)}
\\
&=-\mathcal{R}^{[a,b]}\left(f,u;t_0,x,t_1\right),
\end{align*}
It is well known that if $p : [c, d] \to \mathbb{R}$ is continuous
and $\nu : [c, d] \to \mathbb{R}$ is of $p$-bounded variation
($1\le p<\infty$), then the Riemann-Stieltjes integral
$\int_c^d{p(t)d\nu (t)}$ exists and the following inequality
holds:
\begin{align}
\label{eq2.5}\left| {\int_c^d {p\left( t \right)d\nu\left( t
\right)} } \right| \le \mathop {\sup }\limits_{t \in \left[ {c,d}
\right]} \left| {p\left( t \right)} \right| \bigvee_c^d\left( \nu
\right).
\end{align}
Applying the inequality \eqref{eq2.5} for $\nu(t) = u(t)$, $p(t)=
f\left( t_0 \right) - f\left( s \right)$, for all $s \in \left[
{a,x} \right]$; and then for $p(t)=f\left( {t_1} \right) - f\left(
s \right)$, $\nu(t) = u(t)$ for all $t \in \left( {x,b} \right]$,
we get
\begin{align}
&\left| {\left[ {u\left( x \right) - u\left( a \right)}
\right]f\left( {t_0 } \right) + \left[ {u\left( b \right) -
u\left( x \right)} \right]f\left( {t_1 } \right) - \int_a^b
{f\left( s \right)du\left( s \right)}} \right|
\nonumber\\
&=\left| {\int_a^x {\left[ {f\left( {t_0 } \right) - f\left( s
\right)} \right]du\left( s \right)}  + \int_x^b {\left[ {f\left(
{t_1 } \right) - f\left( s \right)} \right]du\left( s \right)}}
\right|
\nonumber\\
&\le \left| {\int_a^x {\left[ {f\left( {t_0 } \right) - f\left( s
\right)} \right]du\left( s \right)}  } \right| + \left| { \int_x^b
{\left[ {f\left( {t_1 } \right) - f\left( s \right)}
\right]du\left( s \right)} } \right|
\nonumber\\
&\le \mathop {\sup }\limits_{s \in \left[ {a,x} \right]} \left|
{f\left( {t_0} \right) - f\left( s \right)} \right| \cdot
\bigvee_a^{x}\left( {u;p} \right) + \mathop {\sup }\limits_{s \in
\left[ {x,b} \right]} \left| {f\left( {t_1} \right) - f\left( s
\right)} \right| \cdot \bigvee_{x}^b\left( {u;p}
\right).\label{eq2.6}
\end{align}
As $f$ is of $r$-$H$-H\"{o}lder type, we have
\begin{align*}
\mathop {\sup }\limits_{s \in \left[ {a,x} \right]} \left|
{f\left( {t_0} \right) - f\left( s \right)} \right| &\le \mathop
{\sup }\limits_{s \in \left[ {a,x} \right]} \left[ {H\left| {t_0 -
s} \right|^r } \right]
\\
&= H\max \left\{ {\left( {x- t_0} \right)^r ,\left( {t_0-a}
\right)^r } \right\}
\\
&= H\left[ {\max \left\{ {\left( {x - t_0} \right),\left( {t_0-a}
\right)} \right\}} \right]^r
\\
&= H\left[ {\frac{{x - a}}{2} + \left| {t_0 - \frac{{a + x}}{2}}
\right|} \right]^r,
\end{align*}
and
\begin{align*}
\mathop {\sup }\limits_{s \in \left[ {x,b} \right]} \left|
{f\left( {t_1} \right) - f\left( s \right)} \right| &\le \mathop
{\sup }\limits_{s \in \left[ {x,b} \right]} \left[ {H\left| {t_1 -
s} \right|^r } \right]
\\
&= H\max \left\{ {\left( {t_1-x} \right)^r ,\left( {b-t_1}
\right)^r } \right\}
\\
&= H\left[ {\max \left\{ {\left( {t_1 - x} \right),\left( {b -
t_1} \right)} \right\}} \right]^r
\\
&= H\left[ {\frac{{b - x}}{2} + \left| {t_1 - \frac{{x + b}}{2}}
\right|} \right]^r.
\end{align*}
Therefore, by \eqref{eq2.6}, we have
\begin{align*}
&\left| {\left[ {u\left( x \right) - u\left( a \right)}
\right]f\left( {t_0 } \right) + \left[ {u\left( b \right) -
u\left( x \right)} \right]f\left( {t_1 } \right) - \int_a^b
{f\left( s \right)du\left( s \right)}} \right|
\\
&\le H\left[ {\frac{{x - a}}{2} + \left| {t_0 - \frac{{a + x}}{2}}
\right|} \right]^r \cdot \bigvee_a^{x}\left( {u;p} \right) +
H\left[ {\frac{{b - x}}{2} + \left| {t_1 - \frac{{x + b}}{2}}
\right|} \right]^r\cdot \bigvee_{x}^b\left( {u;p} \right)
\\
&\le H \max \left\{ {\left[ {\frac{{x - a}}{2} + \left| {t_0 -
\frac{{a + x}}{2}} \right|} \right]^r,\left[ {\frac{{b - x}}{2} +
\left| {t_1 - \frac{{x + b}}{2}} \right|} \right]^r} \right\}
\cdot \bigvee_a^b\left( {u;p} \right)
\\
&= H \max \left\{ {\left[ {\frac{{x - a}}{2} + \left| {t_0 -
\frac{{a + x}}{2}} \right|} \right],\left[ {\frac{{b - x}}{2} +
\left| {t_1 - \frac{{x + b}}{2}} \right|} \right]} \right\}^r
\cdot \bigvee_a^b\left( {u;p} \right)
\end{align*}
To prove the sharpness of the constant $\frac{1}{2^r}$ for any $r
\in (0,1]$, assume that \eqref{eq2.4} holds with a constant $C >
0$, that is,
\begin{multline}
\label{eq2.7}\left| {\left[ {u\left( x \right) - u\left( a
\right)} \right]f\left( {t_0 } \right) + \left[ {u\left( b \right)
- u\left( x \right)} \right]f\left( {t_1 } \right) - \int_a^b
{f\left( s \right)du\left( s \right)}} \right|
\\
\le H \max \left\{ {\left[ {C\left( {x-a} \right) + \left| {t_0 -
\frac{{a + x}}{2}} \right|} \right],\left[ {C\left( {b-x} \right)
+ \left| {t_1 - \frac{{x + b}}{2}} \right|} \right]} \right\}^r
\cdot \bigvee_a^b\left( {u;p} \right).
\end{multline}
Choose $f(t)= t^r$, $r \in (0,1]$, $t \in [0,1]$ and $u : [0,1]
\to [0,\infty)$ given by
\begin{align*}
u\left( t \right) = \left\{ \begin{array}{l}
 0,\,\,\,\,\,\,\,\,\,\,\,t \in \left( {0,1} \right] \\
 \\
 -1,\,\,\,\,\,\,\,t = 0 \\
 \end{array} \right.
\end{align*}
As
\begin{align*}
\left| {f\left( x \right) - f\left( y \right)} \right| = \left|
{x^r  - y^r } \right| \le \left| {x - y} \right|^r, \,\, \forall x
\in [0,1], \,\,r \in (0,1],
\end{align*}
it follows that $f$ is $r$-$H$-H\"{o}lder type with the constant
$H = 1$.

By using the integration by parts formula for Riemann-Stieltjes
integrals, we have:
\begin{align*}
\int_0^1 {f\left( t \right)du\left( t \right)}  = f\left( 1
\right)u\left( 1 \right) - f\left( 0 \right)u\left( 0 \right) -
\int_0^1 {u\left( t \right)df\left( t \right)}=   0,
\end{align*}
and $\bigvee_0^1(u;p) = 1$. Consequently, by \eqref{eq2.7}, we get
\begin{align*}
\left| {t_0^r} \right| \le \max \left\{ {\left[ {Cx + \left| {t_0
- \frac{{x}}{2}} \right|} \right],\left[ {C\left( {1-x} \right) +
\left| {t_1 - \frac{{x + 1}}{2}} \right|} \right]} \right\}^r,
\,\,\, \forall t_0 \in \left[ {0,1} \right].
\end{align*}
For $t_0 = \frac{x}{2}$ and $t_1=x=1$ we get $\frac{1}{{2^r }} \le
C^r $, which implies that $C \ge \frac{1}{2}$.

It remains to prove the second part, so we consider
\begin{align*}
u\left( t \right) = \left\{ \begin{array}{l}
 0,\,\,\,\,\,\,\,\,\,\,\,t \in \left[ {0,1} \right) \\
 \\
 1,\,\,\,\,\,\,\,t = 1 \\
 \end{array} \right.
\end{align*}
therefore as we have obtained previously
\begin{align*}
\int_0^1 {f\left( t \right)du\left( t \right)} =
0\,\,\,\,\text{and}\,\,\,\,\bigvee_0^1(u;p) = 1.
\end{align*}
Consequently, by \eqref{eq2.4}, we get
\begin{align*}
\left| {t_1^r} \right| \le \max \left\{ {\left[ {Cx + \left| {t_0
- \frac{{x}}{2}} \right|} \right],\left[ {C\left( {1-x} \right) +
\left| {t_1 - \frac{{x + 1}}{2}} \right|} \right]} \right\}^r,
\,\,\, \forall t_0 \in \left[ {0,1} \right].
\end{align*}
For $t_0 = x=0$ and $t_1=\frac{1}{2}$ we get $\frac{1}{{2^r }} \le
C^r $, which implies that $C \ge \frac{1}{2}$, and the theorem is
completely proved.

\end{proof}

The following inequalities are hold:
\begin{corollary}
Let $f$ and $u$ as in Theorem \ref{thm1}. In \ref{eq2.4} choose
\begin{enumerate}
\item $t_0 = a$ and $t_1=b$, then we get the following trapezoid
type inequality
\begin{align*}
\left| {\mathcal{R}^{[a,b]}\left(f,u;a,x,b\right)} \right| \le H
\left[ {\frac{{b - a}}{2} + \left| {x - \frac{{a + b}}{2}}
\right|} \right]^r  \cdot \bigvee_a^b\left( {u;p} \right).
\end{align*}
or equivalently, we may write using parts formula for
Riemann-Stieltjes integral
\begin{align*}
\left| {\left[ {f\left( b \right) - f\left( a \right)}
\right]u\left( x \right) - \int_a^b {u\left( s \right)df\left( s
\right)}} \right| \le H \left[ {\frac{{b - a}}{2} + \left| {x -
\frac{{a + b}}{2}} \right|} \right]^r  \cdot \bigvee_a^b\left(
{u;p} \right).
\end{align*}
The constant $\frac{1}{2}$ is the best possible for all $r \in
(0,1]$.\\

\item $x = \frac{a+b}{2}$, then we get the following mid-point
type inequality
\begin{multline*}
\left| {\mathcal{R}^{[a,b]}\left(f,u;t_0,\frac{a+b}{2},t_1\right)
} \right|
\\
\le H \max \left\{ {\left[ {\frac{{b - a}}{4} + \left| {t_0 -
\frac{{3a + b}}{4}} \right|} \right],\left[ {\frac{{b - a}}{4} +
\left| {t_1 - \frac{{a + 3b}}{4}} \right|} \right]} \right\}^r
\cdot \bigvee_a^b\left( {u;p} \right).
\end{multline*}
The constant $\frac{1}{4}$ is the best possible for all $r \in
(0,1]$. For instance, setting $t_0=y$ and $t_1=a+b-y$, we get
\begin{align*}
\left| {\mathcal{R}^{[a,b]}\left(f,u;y,\frac{a+b}{2},a+b-y\right)}
\right| \le H \left[ {\frac{{b - a}}{4} + \left| {y - \frac{{3a +
b}}{4}} \right|} \right]^r \cdot \bigvee_a^b\left( {u;p} \right).
\end{align*}
for all $y\in \left[{a,\frac{a+b}{2}}\right]$.\\

\item $t_0=\frac{a+x}{2}$ and $t_1=\frac{x+b}{2}$, then
\begin{align*}
\left|
{\mathcal{R}^{[a,b]}\left(f,u;\frac{a+x}{2},x,\frac{x+b}{2}\right)}
\right| \le \frac{H}{2^r}\left[ {\frac{{b - a}}{2} + \left| {x -
\frac{{a + b}}{2}} \right|} \right]^r \cdot \bigvee_a^b\left(
{u;p} \right)
\end{align*}
Both constants $\frac{1}{2^r}$ and $\frac{1}{2}$ are the best
possible for all $r \in (0,1]$.
\end{enumerate}
\end{corollary}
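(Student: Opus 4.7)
The plan is to derive each of the three inequalities by specialising the master bound \eqref{eq2.4} and simplifying the resulting max-term, and then to transfer the sharpness argument from the proof of Theorem \ref{thm1} to each case.

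For part (1), substituting $t_0=a$ and $t_1=b$ in \eqref{eq2.4} makes the two bracketed expressions inside the outer max collapse to $\tfrac{x-a}{2}+|a-\tfrac{a+x}{2}|=x-a$ and $\tfrac{b-x}{2}+|b-\tfrac{x+b}{2}|=b-x$, while $\max\{x-a,b-x\}=\tfrac{b-a}{2}+|x-\tfrac{a+b}{2}|$. The equivalent ``integration by parts'' form then follows by applying the Riemann--Stieltjes parts formula to $\int_a^b f\,du$, which, after cancellation, rewrites the remainder as
\begin{align*}
\mathcal{R}^{[a,b]}(f,u;a,x,b)=[f(b)-f(a)]\,u(x)-\int_a^b u(s)\,df(s).
\end{align*}
For part (2), setting $x=\tfrac{a+b}{2}$ yields $\tfrac{x-a}{2}=\tfrac{b-x}{2}=\tfrac{b-a}{4}$, $\tfrac{a+x}{2}=\tfrac{3a+b}{4}$ and $\tfrac{x+b}{2}=\tfrac{a+3b}{4}$, which is a direct substitution; the symmetric sub-choice $t_0=y$, $t_1=a+b-y$ with $y\in[a,\tfrac{a+b}{2}]$ exploits the reflection identity $|(a+b-y)-\tfrac{a+3b}{4}|=|y-\tfrac{3a+b}{4}|$, so both entries of the max coincide and the outer $\max$ disappears. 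For part (3), substituting $t_0=\tfrac{a+x}{2}$ and $t_1=\tfrac{x+b}{2}$ annihilates both absolute values in \eqref{eq2.4}, leaving $\max\{\tfrac{x-a}{2},\tfrac{b-x}{2}\}=\tfrac{1}{2}\bigl[\tfrac{b-a}{2}+|x-\tfrac{a+b}{2}|\bigr]$, and raising to the power $r$ pulls the factor $\tfrac{1}{2^r}$ outside.

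For sharpness in each case I would re-use the two step integrators $u:[0,1]\to\mathbb{R}$ and the test function $f(t)=t^r$ constructed at the end of the proof of Theorem \ref{thm1}. The argument there showed that, with $H=1$ and $\bigvee_0^1(u;p)=1$, any admissible constant $C$ multiplying the linear term $(x-a)$ inside the max must satisfy $C\ge\tfrac12$; since the simplifications above preserve this $\tfrac12$ factor (and push it through the outer $r$-th power in parts (2) and (3)), the advertised best constants $\tfrac12$, $\tfrac14$ and $\tfrac{1}{2^r}$ are attained. The main bookkeeping obstacle is to verify, case by case, that the triples $(t_0,t_1,x)$ used in the Theorem \ref{thm1} counterexample remain admissible under the extra constraints now imposed (for instance $x=\tfrac{a+b}{2}$ in part (2), or $t_0=\tfrac{a+x}{2}$ and $t_1=\tfrac{x+b}{2}$ in part (3)), so that the same extremal configuration actually saturates each specialised inequality.
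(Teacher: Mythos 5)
Your derivation of the three inequalities is correct and is exactly the route the paper intends (the paper states this corollary with no proof at all, leaving the substitutions to the reader): the algebra in each case checks out, including the identities $\max\{x-a,b-x\}=\tfrac{b-a}{2}+|x-\tfrac{a+b}{2}|$, the integration-by-parts rewriting of $\mathcal{R}^{[a,b]}(f,u;a,x,b)$, the reflection identity in part (2), and the extraction of $2^{-r}$ in part (3).

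The gap is in the sharpness claims, which are part of the statement. You propose to ``re-use the same extremal configuration'' from Theorem \ref{thm1}, but that configuration does not survive the constraint of part (1). In the theorem's proof the lower bound $C\ge\tfrac12$ comes from choosing $f(t)=t^r$, the step integrator with $\bigvee_0^1(u;p)=1$, and crucially $t_0=\tfrac{x}{2}$, $t_1=x=1$, which yields $|\mathcal{R}|=t_0^r=2^{-r}$. In part (1) you are forced to take $t_0=a=0$, so $f(t_0)=0$ and the same computation gives $|\mathcal{R}|=t_0^r=0$: the example produces the vacuous inequality $0\le C^r$ and no lower bound on $C$ whatsoever. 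A genuinely different extremal pair (for instance an $f$ whose Hölder increment is saturated between an endpoint and the point $x$, together with an integrator concentrating its variation near $x$) is needed to certify that $\tfrac12$ is best possible in the trapezoid case. Parts (2) and (3) do transfer: with $x=\tfrac{a+b}{2}$ one takes $t_0=\tfrac14$, $t_1=\tfrac34$ on $[0,1]$ and gets $(\tfrac14)^r\le C^r$, and in part (3) the admissible triple $t_0=\tfrac{x}{2}$, $t_1=\tfrac{x+1}{2}$, $x=1$ reproduces the theorem's computation verbatim. You correctly flagged admissibility as the ``main bookkeeping obstacle,'' but for part (1) the obstacle is not bookkeeping --- the proposed example genuinely fails and must be replaced.
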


\begin{corollary}
\label{cor4}Let $f$ be a   H\"{o}lder continuous function of order
$r$ $(0<r\le 1)$, on $[a,b]$, and $g : [a,b] \to \mathbb{R}$ is
continuous on $[a,b]$. Then we have the inequality
\begin{multline*}
 \left| { f\left( {t_0 } \right)\int_a^x
{g\left( s \right)ds}  + f\left( {t_1 } \right)\int_x^b {g\left( s
\right)ds}  - \int_a^b {f\left( s \right)g\left( s \right)ds} }
\right|
\\
\le H \max \left\{ {\left[ {\frac{{x - a}}{2} + \left| {t_0 -
\frac{{a + x}}{2}} \right|} \right],\left[ {\frac{{b - x}}{2} +
\left| {t_1 - \frac{{x + b}}{2}} \right|} \right]} \right\}^r
\cdot \left\| g \right\|_p,
\end{multline*}
for all $a \le t_0 \le x \le t_1 \le b$, where $\left\| g
\right\|_p = \left({\int_a^b {\left| {g\left( t \right)}
\right|^pdt} }\right)^{1/p}$.
\end{corollary}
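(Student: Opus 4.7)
The plan is to reduce Corollary \ref{cor4} to a direct application of Theorem \ref{thm1} by the natural choice $u(t):=\int_a^t g(s)\,ds$, which converts the Riemann--Stieltjes integral against $u$ into the Riemann integral of $fg$, and whose $p$--variation is precisely $\|g\|_p$.

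First I would verify that this $u$ identifies the two remainders. Since $g$ is continuous on $[a,b]$, the function $u$ is continuously differentiable with $u'(t)=g(t)$ on $[a,b]$; by a standard property of the Riemann--Stieltjes integral, $\int_\alpha^\beta f(s)\,du(s) = \int_\alpha^\beta f(s)g(s)\,ds$ for every $[\alpha,\beta]\subseteq[a,b]$. In particular,
\begin{align*}
u(x)-u(a) = \int_a^x g(s)\,ds,\qquad u(b)-u(x) = \int_x^b g(s)\,ds,\qquad \int_a^b f(s)\,du(s) = \int_a^b f(s)g(s)\,ds,
\end{align*}
so the quantity on the left of Corollary \ref{cor4} is exactly $\bigl|\mathcal{R}^{[a,b]}(f,u;t_0,x,t_1)\bigr|$.

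Next I would compute $\bigvee_a^b(u;p)$. Since $u$ is differentiable on $[a,b]$, the last bullet in the preliminaries gives
\begin{align*}
\bigvee_a^b(u;p) \;=\; \|u'\|_p \;=\; \|g\|_p,\qquad 1\le p<\infty,
\end{align*}
which is finite because $g$, being continuous on the compact interval $[a,b]$, lies in $L^p[a,b]$. Hence $u$ satisfies the hypotheses of Theorem \ref{thm1} (the integrand $f$ is H\"older of order $r$ by assumption). Applying that theorem with this choice of $u$ and substituting the two identifications above yields precisely the bound claimed in Corollary \ref{cor4}.

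There is no genuine obstacle here; the only point requiring a moment's care is invoking the identity $\bigvee_a^b(u;p)=\|u'\|_p$, which is valid because $u'=g$ is continuous (so $u$ is $C^1$, hence absolutely continuous, and the identity in the preliminaries applies unambiguously). The constants inherited from Theorem \ref{thm1} are optimal, though Corollary \ref{cor4} only records the inequality itself.
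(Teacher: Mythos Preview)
Your proposal is correct and is essentially identical to the paper's own proof: the paper also defines $u(t)=\int_a^t g(s)\,ds$, notes that $u'(t)=g(t)$ so $\int_a^b f\,du=\int_a^b fg\,ds$ and $\bigvee_a^b(u;p)=\|g\|_p$, and then invokes Theorem~\ref{thm1}. Your write-up in fact supplies slightly more detail (the identification of $u(x)-u(a)$ and $u(b)-u(x)$, and the remark that continuity of $g$ on $[a,b]$ ensures $g\in L^p$), but the argument is the same.
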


\begin{proof}
Define the mapping $u:[a,b] \to \mathbb{R}$, $u(t) =
\int_a^t{g(s)ds}$. Then $u$ is differentiable on $(a,b)$ and
$u'(t) = g(t)$. Using the properties of the Riemann-Stieltjes
integral, we have
\begin{equation*}
\int_a^b {f\left( t \right)du\left( t \right)}  = \int_a^b
{f\left( t \right)g\left( t \right)dt},
\end{equation*}
and
\begin{equation*}
\bigvee_a^b\left( u;p \right) = \left({\int_a^b {\left| {u'\left(
t \right)} \right|^pdt} }\right)^{1/p}  = \left({\int_a^b {\left|
{g\left( t \right)} \right|^pdt} }\right)^{1/p},
\end{equation*}
which gives the required result.
\end{proof}

\begin{theorem}
\label{thm2}Let $1 \le p < \infty$. Let $f,u : [a,b] \to
\mathbb{R}$ be such that is $f \in L^p[a,b]$ and $u$ has a
Lipschitz property on $[a,b]$. If $f$ is $r$--$H$--H\"{o}lder
continuous, then the inequality
\begin{multline}
\left| {\mathcal{R}^{[a,b]}\left(f,u;t_0,x,t_1\right) } \right|
\le H L\left[{\left(x-a\right)^{1-\frac{1}{p}} \left(
{\frac{{\left( {t_0 - a} \right)^{rp + 1}+ \left( {x - t_0 }
\right)^{rp + 1} }}{{rp + 1}}} \right)^{\frac{1}{p}} }\right.
\\
\left.{+ \left(b-x\right)^{1-\frac{1}{p}} \left( {\frac{{\left(
{t_1 - x} \right)^{rp + 1}  + \left( {b - t_1 } \right)^{rp + 1}
}}{{rp + 1}}} \right)^{\frac{1}{p}} }\right]\label{eq2.8}
\end{multline}
holds for all $p>1$ and $r\in \left( 0,1\right]$.
\end{theorem}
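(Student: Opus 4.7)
The plan is to reuse the integration-by-parts identity already established in the proof of Theorem~\ref{thm1}, namely
\begin{align*}
-\mathcal{R}^{[a,b]}\left(f,u;t_0,x,t_1\right) = \int_a^x \left[f(t_0)-f(s)\right]du(s) + \int_x^b \left[f(t_1)-f(s)\right]du(s),
\end{align*}
and then to estimate each of the two Riemann--Stieltjes integrals separately by means of Lemma~\ref{lemma2}, exploiting the Lipschitz property of $u$ together with the H\"{o}lder property of $f$.

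First I would take absolute values and apply the triangle inequality to split the right--hand side into two pieces. For the piece over $[a,x]$ I would set $w(s)=f(t_0)-f(s)$ and $\nu(s)=u(s)$; since $u$ is Lipschitz on $[a,b]$ with constant $L$, the restriction of $u$ to $[a,x]$ is Lipschitz with the same constant, and Lemma~\ref{lemma2} delivers
\begin{align*}
\left|\int_a^x \left[f(t_0)-f(s)\right]du(s)\right| \le L\,(x-a)^{1-\frac{1}{p}}\left(\int_a^x \left|f(t_0)-f(s)\right|^p ds\right)^{\frac{1}{p}}.
\end{align*}
The $r$--$H$--H\"{o}lder hypothesis then gives $\left|f(t_0)-f(s)\right|^p \le H^p\,|t_0-s|^{rp}$, so the $L^p$--norm is controlled by $H\bigl(\int_a^x |t_0-s|^{rp}ds\bigr)^{1/p}$.

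The key elementary computation is to split the integral at the point $s=t_0$, which lies inside $[a,x]$ by hypothesis:
\begin{align*}
\int_a^x |t_0-s|^{rp}ds = \int_a^{t_0}(t_0-s)^{rp}ds + \int_{t_0}^{x}(s-t_0)^{rp}ds = \frac{(t_0-a)^{rp+1}+(x-t_0)^{rp+1}}{rp+1}.
\end{align*}
This produces exactly the first summand of the claimed bound. Repeating the whole argument verbatim on $[x,b]$ with $t_1$ in place of $t_0$ (using $x\le t_1\le b$ to split at $s=t_1$) yields the second summand, and adding the two estimates completes the proof.

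The argument is essentially routine once the identity from Theorem~\ref{thm1} is available; the only point that requires attention is keeping track of where $t_0$ sits inside $[a,x]$ (and $t_1$ inside $[x,b]$) in order to split $|t_0-s|^{rp}$ and $|t_1-s|^{rp}$ correctly before integration. No step is genuinely delicate, so I do not anticipate a serious obstacle beyond bookkeeping of exponents.
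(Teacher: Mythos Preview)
Your proposal is correct and follows essentially the same route as the paper: the paper also starts from the integration-by-parts identity of Theorem~\ref{thm1}, applies the triangle inequality and Lemma~\ref{lemma2} on each subinterval, invokes the $r$--$H$--H\"{o}lder condition to replace $|f(t_i)-f(s)|^p$ by $H^p|t_i-s|^{rp}$, and then evaluates $\int_a^x|t_0-s|^{rp}ds$ and $\int_x^b|t_1-s|^{rp}ds$ exactly as you describe. Your write-up is in fact slightly more explicit about the splitting at $t_0$ and $t_1$, which the paper leaves implicit in the final computation.
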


\begin{proof}
From Lemma \ref{lemma2} we have
\begin{align*}
&\left| {\left[ {u\left( x \right) - u\left( a \right)}
\right]f\left( {t_0 } \right) + \left[ {u\left( b \right) -
u\left( x \right)} \right]f\left( {t_1 } \right) - \int_a^b
{f\left( s \right)du\left( s \right)}} \right|
\nonumber\\
&=\left| {\int_a^x {\left[ {f\left( {t_0 } \right) - f\left( s
\right)} \right]du\left( s \right)}  + \int_x^b {\left[ {f\left(
{t_1 } \right) - f\left( s \right)} \right]du\left( s \right)}}
\right|
\nonumber\\
&\le \left| {\int_a^x {\left[ {f\left( {t_0 } \right) - f\left( s
\right)} \right]du\left( s \right)}  } \right| + \left| { \int_x^b
{\left[ {f\left( {t_1 } \right) - f\left( s \right)}
\right]du\left( s \right)} } \right|
\nonumber\\
&\le L\left[{\left(x-a\right)^{1-\frac{1}{p}}\left({\int_a^x
{\left| {f\left( {t_0 } \right) - f\left( s \right)}
\right|^pds}}\right)^{\frac{1}{p}}  } \right.
\nonumber\\
&\qquad\qquad \left.{+
\left(b-x\right)^{1-\frac{1}{p}}\left({\int_x^b {\left| {f\left(
{t_1 } \right) - f\left( s \right)} \right|^pds}
}\right)^{\frac{1}{p}}} \right]
\nonumber\\
&\le H L\left[{\left(x-a\right)^{1-\frac{1}{p}} \left({\int_a^x
{\left| {t_0-s} \right|^{rp}ds}}\right)^{\frac{1}{p}}  +
\left(b-x\right)^{1-\frac{1}{p}} \left({\int_x^b {\left| {t_1 - s}
\right|^{rp}ds} }\right)^{\frac{1}{p}}} \right]
\\
&=H L\left[{\left(x-a\right)^{1-\frac{1}{p}} \left( {\frac{{\left(
{t_0 - a} \right)^{rp + 1}+ \left( {x - t_0 } \right)^{rp + 1}
}}{{rp + 1}}} \right)^{\frac{1}{p}} }\right.
\\
&\qquad\qquad\left.{+ \left(b-x\right)^{1-\frac{1}{p}} \left(
{\frac{{\left( {t_1 - x} \right)^{rp + 1}  + \left( {b - t_1 }
\right)^{rp + 1} }}{{rp + 1}}} \right)^{\frac{1}{p}} }\right].
\end{align*}
which proves the required result.
\end{proof}

\begin{corollary}
    Let $f$ and $u$ as in Theorem \ref{thm2}. In
    \eqref{eq2.8} choose
    \begin{enumerate}
        \item $t_0 = a$ and $t_1=b$, then we get the following trapezoid
        type inequality
        \begin{align*}
        \left| {\mathcal{R}^{[a,b]}\left(f,u;a,x,b\right) } \right|
        \le H L\left[{\left(x-a\right)^{1-\frac{1}{p}} \left( {\frac{{   \left( {x - a } \right)^{rp + 1}
                }}{{rp + 1}}} \right)^{\frac{1}{p}}  + \left(b-x\right)^{1-\frac{1}{p}} \left(
            {\frac{{\left( {b - x} \right)^{rp + 1}    }}{{rp + 1}} } \right)^{\frac{1}{p}} }\right].
        \end{align*}
        or equivalently, we may write using parts formula for
        Riemann-Stieltjes integral
        \begin{multline*}
        \left| {\left[ {f\left( b \right) - f\left( a \right)}
            \right]u\left( x \right) - \int_a^b {u\left( s \right)df\left( s
                \right)}} \right|
        \\
        \le H L\left[{\left(x-a\right)^{1-\frac{1}{p}} \left( {\frac{{   \left( {x - a } \right)^{rp + 1}
                }}{{rp + 1}}} \right)^{\frac{1}{p}}  + \left(b-x\right)^{1-\frac{1}{p}} \left(
            {\frac{{\left( {b - x} \right)^{rp + 1}    }}{{rp + 1}} } \right)^{\frac{1}{p}} }\right].
        \end{multline*}

        \item $x = \frac{a+b}{2}$, then we get the following mid-point
        type inequality
        \begin{multline*}
        \left| { \mathcal{R}^{[a,b]}\left(f,u;t_0,\frac{a+b}{2},t_1\right)} \right|
        \\
        \le H L\left[{\left(\frac{b-a}{2}\right)^{1-\frac{1}{p}} \left( {\frac{{\left(
                        {t_0 - a} \right)^{rp + 1}+ \left( {\frac{a+b}{2} - t_0 } \right)^{rp + 1}
                }}{{rp + 1}}} \right)^{\frac{1}{p}} }\right.
        \\
     \left.{+ \left(\frac{b-a}{2}\right)^{1-\frac{1}{p}} \left(
            {\frac{{\left( {t_1 - \frac{a+b}{2}} \right)^{rp + 1}  + \left( {b - t_1 }
                        \right)^{rp + 1} }}{{rp + 1}}} \right)^{\frac{1}{p}} }\right].
        \end{multline*}
      For instance, setting $t_0=y$ and $t_1=a+b-y$, we get
        \begin{multline*}
        \left| { \mathcal{R}^{[a,b]}\left(f,u;y,\frac{a+b}{2},a+b-y\right) } \right|
        \\
        \le     2H L\left[{\left(\frac{b-a}{2}\right)^{1-\frac{1}{p}} \left( {\frac{{\left(
                        {t_0 - a} \right)^{rp + 1}+ \left( {\frac{a+b}{2} - t_0 } \right)^{rp + 1}
                }}{{rp + 1}}} \right)^{\frac{1}{p}}   }\right].
        \end{multline*}
        for all $y\in \left[{a,\frac{a+b}{2}}\right]$.\\

    \item $t_0=\frac{3a+b}{4}$, $x=\frac{a+b}{2}$ and $t_1=\frac{a+3b}{4}$, then
    \begin{align*}
 \left| { \mathcal{R}^{[a,b]}\left(f,u;\frac{3a+b}{4},\frac{a+b}{2},\frac{a +3b}{4}\right) } \right|
    \le     HL\frac{{\left( {b - a} \right)^{1 + r} }}{{2^{2r + \frac{1}{p}} \left( {rp + 1} \right)^{\frac{1}{p}} }}.
    \end{align*}

    \end{enumerate}
\end{corollary}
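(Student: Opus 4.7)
The corollary is a compilation of three direct specializations of the general bound \eqref{eq2.8} of Theorem \ref{thm2}. The plan is therefore to substitute the prescribed values of $t_0$, $x$, and $t_1$ into the right-hand side of \eqref{eq2.8} and simplify; no new analytic input is required beyond Theorem \ref{thm2}, only careful bookkeeping.

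For item (1), I would set $t_0=a$ and $t_1=b$ in \eqref{eq2.8}. The terms $(t_0-a)^{rp+1}$ and $(b-t_1)^{rp+1}$ vanish, leaving $(x-a)^{rp+1}$ in the first bracket and $(b-x)^{rp+1}$ in the second. This produces the trapezoid-type bound stated. For the equivalent form, I would apply the integration by parts identity for the Riemann--Stieltjes integral,
\begin{equation*}
\int_a^b f(s)\,du(s) = f(b)u(b) - f(a)u(a) - \int_a^b u(s)\,df(s),
\end{equation*}
and observe that
\begin{equation*}
\mathcal{R}^{[a,b]}(f,u;a,x,b) = \int_a^b f(s)\,du(s) - [u(x)-u(a)]f(a) - [u(b)-u(x)]f(b),
\end{equation*}
so that the telescoping $[u(x)-u(a)]f(a) + [u(b)-u(x)]f(b) = u(b)f(b) - u(a)f(a) - u(x)[f(b)-f(a)]$ recasts the remainder (up to sign) as $u(x)[f(b)-f(a)] - \int_a^b u(s)\,df(s)$.

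For item (2), substituting $x=\tfrac{a+b}{2}$ makes $x-a=b-x=\tfrac{b-a}{2}$, so both prefactors $(x-a)^{1-1/p}$ and $(b-x)^{1-1/p}$ collapse to $\bigl(\tfrac{b-a}{2}\bigr)^{1-1/p}$, yielding the stated mid-point bound. The symmetric choice $t_0=y$, $t_1=a+b-y$ then exploits the reflection identities $t_1-x=x-t_0=\tfrac{a+b}{2}-y$ and $b-t_1=t_0-a=y-a$; the two brackets of \eqref{eq2.8} become numerically equal, and adding them supplies the factor $2$ in front.

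For item (3), I would substitute $t_0=\tfrac{3a+b}{4}$, $x=\tfrac{a+b}{2}$, $t_1=\tfrac{a+3b}{4}$. Elementary arithmetic gives the four equal increments $t_0-a = x-t_0 = t_1-x = b-t_1 = \tfrac{b-a}{4}$, together with $x-a=b-x=\tfrac{b-a}{2}$. Each of the two brackets in \eqref{eq2.8} then reduces to
\begin{equation*}
\left(\tfrac{b-a}{2}\right)^{1-\frac{1}{p}}\left(\tfrac{2\left(\tfrac{b-a}{4}\right)^{rp+1}}{rp+1}\right)^{\frac{1}{p}},
\end{equation*}
and collecting the powers of $(b-a)$ and of $2$ across the factors $\bigl(\tfrac{b-a}{2}\bigr)^{1-1/p}$, $\bigl(\tfrac{b-a}{4}\bigr)^{r+1/p}$, and the $2^{1/p}$ liberated from the $p$-th root yields the compact closed form $\frac{HL(b-a)^{1+r}}{2^{2r+1/p}(rp+1)^{1/p}}$. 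The only step requiring any care is the exponent bookkeeping in this last case; I would convert every factor to base $2$ at once, add the exponents, and then reassemble, which avoids the risk of sign or off-by-one errors in the power of $2$.
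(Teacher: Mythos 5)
Your treatment of items (1) and (2) is correct and is exactly the intended route: the corollary is a direct specialization of \eqref{eq2.8}; the vanishing of $(t_0-a)^{rp+1}$ and $(b-t_1)^{rp+1}$ gives the trapezoid form, the integration-by-parts identity correctly recasts $\mathcal{R}^{[a,b]}(f,u;a,x,b)$ as $[f(b)-f(a)]u(x)-\int_a^b u\,df$, and the reflection $t_0=y$, $t_1=a+b-y$ makes the two brackets of \eqref{eq2.8} equal, which accounts for the factor $2$.

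The gap is in item (3): the exponent bookkeeping you describe does not produce the constant you (and the statement) assert. With $t_0-a=x-t_0=t_1-x=b-t_1=\frac{b-a}{4}$ and $x-a=b-x=\frac{b-a}{2}$, each bracket of \eqref{eq2.8} equals
\begin{equation*}
\left(\frac{b-a}{2}\right)^{1-\frac{1}{p}}\left(\frac{2\left(\frac{b-a}{4}\right)^{rp+1}}{rp+1}\right)^{\frac{1}{p}}
=\frac{\left(b-a\right)^{1+r}}{2^{\,1+2r}\left(rp+1\right)^{\frac{1}{p}}},
\end{equation*}
because the powers of $2$ collected in the denominator are $2^{1-\frac{1}{p}}\cdot 2^{2r+\frac{2}{p}}\cdot 2^{-\frac{1}{p}}=2^{1+2r}$. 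Summing the two identical brackets yields $\frac{HL(b-a)^{1+r}}{2^{2r}(rp+1)^{1/p}}$, not $\frac{HL(b-a)^{1+r}}{2^{2r+1/p}(rp+1)^{1/p}}$. The displayed constant is smaller by the factor $2^{1/p}$ and therefore does not follow from \eqref{eq2.8}; a sanity check at $r=p=1$ gives $\frac{(b-a)^2}{8}$ by substitution versus $\frac{(b-a)^2}{16}$ as displayed. The error originates in the statement itself, but your claim that converting everything to base $2$ ``yields'' that closed form is exactly the step that fails: the denominator should read $2^{2r}$ in place of $2^{2r+\frac{1}{p}}$.
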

 Now, let $I$ be a real interval such that $[a,b] \subseteq I^{\circ}$
 the interior of $I$, $a,b\in \mathbb{R}$ with $a<b$. Consider
 $\mathfrak{U}^p(I)$ ($p>1$) be the space of all positive $n$-th
 differentiable functions $f$ whose $n$-th derivatives $f^{(n)}$ is
 positive locally absolutely continuous on $I^{\circ}$ with $\int_a^b {\left( {f^{(n)}\left( t \right)} \right)^p dt} < \infty,
 $ and $f^{(n)}(a)=f^{(n)}(b)=0$.

 $L^p$-error estimates for Riemann--Stieltjes  $\int_a^b {f\left( t
    \right)du\left( t \right)}$ where $f$ belongs  to  $\mathfrak{U}^p(I)$ is considered in the following result.
\begin{theorem}
\label{thm3}    Let $1 \le p < \infty$. Let $f,u : [a,b] \to
\mathbb{R}$ be such
    that is $f \in \mathfrak{U}^p(I)$ and $u$ has a Lipschitz property on
    $[a,b]$. If $f$ is $r$--$H$--H\"{o}lder continuous, then the inequality
        holds for all $p>1$ and $r\in \left( 0,1\right]$.
    \begin{multline}
\label{eq2.9}   \left|
{\mathcal{R}^{[a,b]}\left(f,u;t_0,x,t_1\right)} \right|
      \le  L        \left( {\frac{{p\sin \left( {\frac{\pi }{p}} \right)}}{{\pi \sqrt[p]{{p - 1}}}}} \right)^n \left\{ {\left(x-a\right)^{1-\frac{1}{p}}
  \left[ {\frac{{x - a}}{2} + \left| {t_0  - \frac{{x + a}}{2}} \right| } \right]^n   }\right.
\\
  \left.{+\left(b-x\right)^{1-\frac{1}{p}}
 \left[ {\frac{{b - x}}{2} + \left| {t_1  - \frac{{x + b}}{2}} \right| } \right]^n  }\right\}\left\| {f^{\left( n \right)} } \right\|_{p,\left[ {a,b} \right]}
\end{multline}
\end{theorem}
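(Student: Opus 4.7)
The plan is to follow the two-step strategy already used in Theorems~\ref{thm1} and \ref{thm2}. First, starting from the integration-by-parts identity established at the outset of the proof of Theorem~\ref{thm1},
\begin{equation*}
\mathcal{R}^{[a,b]}\left(f,u;t_0,x,t_1\right) = -\int_a^x \left[f(t_0) - f(s)\right] du(s) - \int_x^b \left[f(t_1) - f(s)\right] du(s),
\end{equation*}
I would split the remainder via the triangle inequality. Since $u$ is Lipschitz with constant $L$, applying Lemma~\ref{lemma2} to each of the two integrals (with $w(s) = f(t_j)-f(s)$ and $\nu = u$) immediately produces
\begin{equation*}
\left|\mathcal{R}^{[a,b]}\right| \le L(x-a)^{1-\frac{1}{p}}\left\|f(t_0)-f(\cdot)\right\|_{L^p[a,x]} + L(b-x)^{1-\frac{1}{p}}\left\|f(t_1)-f(\cdot)\right\|_{L^p[x,b]}.
\end{equation*}

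The essentially new ingredient is then a sharp $L^p$ estimate of the form
\begin{equation*}
\|f(t_0)-f(\cdot)\|_{L^p[a,x]} \le \left(\frac{p\sin(\pi/p)}{\pi(p-1)^{1/p}}\right)^n \left[\frac{x-a}{2}+\left|t_0-\frac{x+a}{2}\right|\right]^n \|f^{(n)}\|_{L^p[a,b]},
\end{equation*}
together with its symmetric counterpart on $[x,b]$; substituting both into the previous line yields exactly \eqref{eq2.9}. To obtain this estimate, observe that $s \mapsto f(t_0)-f(s)$ vanishes at the interior point $s=t_0$, so a one-step sharp Wirtinger/Steklov-type inequality
\begin{equation*}
\|g\|_{L^p[\alpha,\beta]} \le \frac{p\sin(\pi/p)}{\pi(p-1)^{1/p}}\,\max(c-\alpha,\beta-c)\,\|g'\|_{L^p[\alpha,\beta]}, \qquad g(c)=0,
\end{equation*}
applies directly (and, reassuringly, reduces to the familiar $2/\pi$ constant at $p=2$). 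The $n$-th power is then produced by exploiting the Peano-kernel / sine-series representation of $f^{(n)}$ that is made available precisely because $f\in\mathfrak{U}^p(I)$ forces $f^{(n)}(a)=f^{(n)}(b)=0$ and $f^{(n)}\in L^p[a,b]$; at each of the $n$ levels the relevant length factor is $\max(t_0-a,x-t_0)=\frac{x-a}{2}+|t_0-\frac{x+a}{2}|$.

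The main obstacle is the bookkeeping behind this last step. A naive iteration—apply the sharp Wirtinger-type inequality to $g=f(t_0)-f(\cdot)$, then to $g'=-f'$, and so on—fails, because the intermediate derivatives $f',f'',\ldots,f^{(n-1)}$ have no prescribed vanishing point. One must therefore route the argument through the Fourier/Peano-kernel expansion of $f^{(n)}$ (using both endpoint conditions $f^{(n)}(a)=f^{(n)}(b)=0$ to develop a pure sine expansion and to control the constants appearing at each successive integration) or, equivalently, invoke a higher-order Landau--Kolmogorov inequality with the Favard-type sharp constant already built in. Tracking this carefully so that the final constant is exactly $\bigl(p\sin(\pi/p)/[\pi(p-1)^{1/p}]\bigr)^n$, and the length factor is $[\max(t_0-a,x-t_0)]^n$ rather than $(b-a)^n$ or some intermediate quantity, is the most delicate part of the argument; the remaining work is a routine application of Lemma~\ref{lemma2}.
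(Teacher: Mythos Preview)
Your approach is exactly the paper's: Lemma~\ref{lemma2} reduces the remainder to the two $L^p$ norms $\|f(t_j)-f(\cdot)\|_{p}$ on $[a,x]$ and $[x,b]$, and each of these is then bounded by the higher-order Wirtinger-type inequality with constant $\bigl(p\sin(\pi/p)/[\pi(p-1)^{1/p}]\bigr)^n$. The step you single out as the delicate one is not re-derived in the paper at all---it is simply quoted as a known inequality (labelled~\eqref{moh}) from the author's earlier work~\cite{alomari} on the Beesack--Wirtinger inequality, so the iteration and endpoint bookkeeping you worry about need not be carried out here.
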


\begin{proof}
As in the proof of Theorem \ref{thm2}, we have by Lemma
\ref{lemma2}
    \begin{align*}
    &\left| {\left[ {u\left( x \right) - u\left( a \right)}
        \right]f\left( {t_0 } \right) + \left[ {u\left( b \right) -
            u\left( x \right)} \right]f\left( {t_1 } \right) - \int_a^b
        {f\left( s \right)du\left( s \right)}} \right|
    \nonumber\\
        &\le L\left[{\left(x-a\right)^{1-\frac{1}{p}}\left({\int_a^x
            {\left| {f\left( {t_0 } \right) - f\left( s \right)}
                \right|^pds}}\right)^{\frac{1}{p}}  } \right.
    \nonumber\\
    &\qquad\qquad \left.{+
        \left(b-x\right)^{1-\frac{1}{p}}\left({\int_x^b {\left| {f\left(
                    {t_1 } \right) - f\left( s \right)} \right|^pds}
        }\right)^{\frac{1}{p}}} \right]
    \nonumber\\
    &\le  L\left[{\left(x-a\right)^{1-\frac{1}{p}}
        \left( {\frac{{p\sin \left( {\frac{\pi }{p}} \right)}}{{\pi \sqrt[p]{{p - 1}}}}} \right)^n \left[ {\frac{{x - a}}{2} + \left| {t_0  - \frac{{x + a}}{2}} \right|^n } \right]\left\| {f^{\left( n \right)} } \right\|_{p,\left[ {a,x} \right]}  }\right.
        \\
        &\qquad\qquad \left.{ +\left(b-x\right)^{1-\frac{1}{p}}
        \left( {\frac{{p\sin \left( {\frac{\pi }{p}} \right)}}{{\pi \sqrt[p]{{p - 1}}}}} \right)^n \left[ {\frac{{b - x}}{2} + \left| {t_1  - \frac{{x + b}}{2}} \right|^n } \right]\left\| {f^{\left( n \right)} } \right\|_{p,\left[ {x,b} \right]}
         } \right]
     \\
      &\le  L       \left( {\frac{{p\sin \left( {\frac{\pi }{p}} \right)}}{{\pi \sqrt[p]{{p - 1}}}}} \right)^n \left\{{\left(x-a\right)^{1-\frac{1}{p}}
 \left[ {\frac{{x - a}}{2} + \left| {t_0  - \frac{{x + a}}{2}} \right|  } \right]^n   }\right.
    \\
    &\qquad\qquad  \left.{+\left(b-x\right)^{1-\frac{1}{p}}
\left[ {\frac{{b - x}}{2} + \left| {t_1  - \frac{{x + b}}{2}}
\right| } \right]^n  }\right\}\left\| {f^{\left( n \right)} }
\right\|_{p,\left[ {a,b} \right]}
    \end{align*}
    which proves the required result, where we have used that fact that
    if
    $h\in \mathfrak{U}^p(I)$   then for all $\xi \in (a,b)$ we have
    \begin{align}
    \label{moh}\int_a^b {\left| {h\left( t \right) - h\left( \xi
            \right)} \right|^p dt}
    \le\left( {\frac{{p^p \sin ^p \left(
                {{\textstyle{\pi  \over p}}} \right)}}{{\pi ^p \left( {p - 1}
                \right)}}} \right)^{n}\left[ {\frac{{b - a}}{2} + \left| {\xi  -
            \frac{{a + b}}{2}} \right|} \right]^{np} \cdot\int_a^b {\left(
        {h^{\left( n \right)} \left( x \right)} \right)^p dx}.
    \end{align}
    In case $n=1$, the inequality \eqref{moh} is sharp, see
    \cite{alomari}.
\end{proof}

\begin{remark}
If  $f \in \mathfrak{U}^p(I)$  and $f^{(n)}$ is bounded on  $I$,
so that as $p\to \infty $ in \eqref{eq2.9}, then since $\mathop
{\lim }\limits_{p \to \infty } \frac{{p\sin \left( {\frac{\pi
}{p}} \right)}}{{\sqrt[p]{{p - 1}}}} = \pi $, therefore we have
\begin{multline}
\left| {\mathcal{R}^{[a,b]}\left(f,u;t_0,x,t_1\right)} \right| \le
L   \left\{ {\left(x-a\right)  \left[ {\frac{{x - a}}{2} + \left|
{t_0  - \frac{{x + a}}{2}} \right| } \right]^n   }\right.
\\
\left.{+\left(b-x\right)    \left[ {\frac{{b - x}}{2} + \left|
{t_1  - \frac{{x + b}}{2}} \right| } \right]^n  }\right\}\left\|
{f^{\left( n \right)} } \right\|_{\infty,\left[ {a,b} \right]}
\end{multline}
\end{remark}

In what follows we observe several general quadrature rules for
the Riemann--Stieltjes integral  $\int_a^b {f\left( t
    \right)du\left( t \right)}$ where $f$ is $n$-times differentiable whose derivatives belongs ton $L^p([a,b])$. To the best of our knowledge, this is the first time  of such result concerning  Riemann--Stieltjes integral without using interpolation.
\begin{corollary}
Let $f$ and $u$ as in Theorem \ref{thm3}. In \eqref{eq2.9} choose
\begin{enumerate}
    \item $t_0 = a$ and $t_1=b$, then we get the following trapezoid
    type inequality
    \begin{align*}
    \left| {\mathcal{R}^{[a,b]}\left(f,u;a,x,b\right)} \right|
    \le L       \left( {\frac{{p\sin \left( {\frac{\pi }{p}} \right)}}{{\pi \sqrt[p]{{p - 1}}}}} \right)^n \left\{{\left(x-a\right)^{n+1-\frac{1}{p}} +\left(b-x\right)^{n+1-\frac{1}{p}}
  }\right\}\left\| {f^{\left( n \right)} } \right\|_{p,\left[ {a,b} \right]}.
    \end{align*}
    or equivalently, we may write using parts formula for
    Riemann-Stieltjes integral
    \begin{multline*}
    \left| {\left[ {f\left( b \right) - f\left( a \right)}
        \right]u\left( x \right) - \int_a^b {u\left( s \right)df\left( s
            \right)}} \right|
    \\
    \le L       \left( {\frac{{p\sin \left( {\frac{\pi }{p}} \right)}}{{\pi \sqrt[p]{{p - 1}}}}} \right)^n \left\{{\left(x-a\right)^{n+1-\frac{1}{p}} +\left(b-x\right)^{n+1-\frac{1}{p}}
    }\right\}\left\| {f^{\left( n \right)} } \right\|_{p,\left[ {a,b} \right]}.
    \end{multline*}

    \item $x = \frac{a+b}{2}$, then we get the following mid-point
    type inequality
    \begin{multline*}
    \left| {\mathcal{R}^{[a,b]}\left(f,u;t_0,\frac{a+b}{2},t_1\right)} \right|
    \\
    \le    L    \left(\frac{b-a}{2}\right)^{1-\frac{1}{p}}  \left( {\frac{{p\sin \left( {\frac{\pi }{p}} \right)}}{{\pi \sqrt[p]{{p - 1}}}}} \right)^n \left\{ {
        \left[ {\frac{{b - a}}{4} + \left| {t_0  - \frac{3a+b}{4}} \right| } \right]^n   }\right.
    \\
    \left.{ +
        \left[ {\frac{{b - a}}{4} + \left| {t_1  - \frac{{a+ 3 b}}{4}} \right| } \right]^n  }\right\}\left\| {f^{\left( n \right)} } \right\|_{p,\left[ {a,b} \right]}.
    \end{multline*}
       For instance, setting $t_0=y$ and $t_1=a+b-y$, we get
    \begin{multline*}
    \left| {\mathcal{R}^{[a,b]}\left(f,u;y,\frac{a+b}{2},a+b-y\right)} \right|
\\
    \le  L  \left(\frac{b-a}{2}\right)^{1-\frac{1}{p}}  \left( {\frac{{p\sin \left( {\frac{\pi }{p}} \right)}}{{\pi \sqrt[p]{{p - 1}}}}} \right)^n \left\{ {
        \left[ {\frac{{b - a}}{4} + \left| {y  - \frac{3a+b}{4}} \right| } \right]^n   }\right.
    \\
    \left.{ +
        \left[ {\frac{{b - a}}{4} + \left| {y  - \frac{{a+ 3 b}}{4}} \right| } \right]^n  }\right\}\left\| {f^{\left( n \right)} } \right\|_{p,\left[ {a,b} \right]}
    \end{multline*}
    for all $y\in \left[{a,\frac{a+b}{2}}\right]$.\\

    \item $t_0=\frac{3a+b}{4}$, $x=\frac{a+b}{2}$ and $t_1=\frac{a+3b}{4}$, then
    \begin{align*}
 \left| {\mathcal{R}^{[a,b]}\left(f,u;\frac{3a+b}{4},\frac{a+b}{2},\frac{a+3b}{2}\right)} \right|
    \le \frac{L}{{2^{n - \frac{1}{p}} }}\left( {b - a} \right)^{n + 1 - \frac{1}{p}}
    \left( {\frac{{p\sin \left( {\frac{\pi }{p}} \right)}}{{\pi \sqrt[p]{{p - 1}}}}} \right)^n  \left\| {f^{\left( n \right)} } \right\|_{p,\left[ {a,b} \right]}
    \end{align*}
 \end{enumerate}
\end{corollary}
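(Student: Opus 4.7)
My plan is to deduce all three items as direct specializations of Theorem~\ref{thm3}: in each case I apply the inequality \eqref{eq2.9} with the prescribed values of $t_0$, $x$, $t_1$, and simplify the absolute-value terms and the prefactors. No new machinery is required, so the proof amounts to a bookkeeping exercise in the exponents of $(b-a)$ and $2$.

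For item~(1) I substitute $t_0 = a$ and $t_1 = b$. Then $\lvert t_0 - (x+a)/2\rvert = (x-a)/2$ and $\lvert t_1 - (x+b)/2\rvert = (b-x)/2$, so the two brackets in \eqref{eq2.9} collapse to $(x-a)^n$ and $(b-x)^n$; multiplying by the prefactors $(x-a)^{1-1/p}$ and $(b-x)^{1-1/p}$ yields the trapezoid-type estimate, and the equivalent form follows at once from the integration-by-parts identity $\int_a^b f\,du = f(b)u(b) - f(a)u(a) - \int_a^b u\,df$. For item~(2), substituting $x = (a+b)/2$ gives $(x-a)/2 = (b-x)/2 = (b-a)/4$, $(x+a)/2 = (3a+b)/4$, $(x+b)/2 = (a+3b)/4$, and both prefactors become $\bigl((b-a)/2\bigr)^{1-1/p}$, from which the midpoint-type inequality is read off. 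The symmetric sub-case $t_0 = y$, $t_1 = a+b-y$ with $y \in [a,(a+b)/2]$ makes the two brackets coincide, so the two contributions combine into the factor $2$ shown in the stated bound.

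For item~(3), the additional substitutions $t_0 = (3a+b)/4$ and $t_1 = (a+3b)/4$ on top of part~(2) annihilate both absolute-value deviations, so each bracket reduces to $(b-a)/4$; each of the two summands becomes $\bigl((b-a)/2\bigr)^{1-1/p}\bigl((b-a)/4\bigr)^n$, and summing the two equal contributions and collecting powers gives the claimed closed form. The main (and only) point to watch is keeping the exponents of $2$ straight when multiplying $\bigl((b-a)/2\bigr)^{1-1/p}$ by $\bigl((b-a)/4\bigr)^n$ and doubling the result; there is no conceptual obstacle beyond this arithmetic, since Theorem~\ref{thm3} already encodes all of the analytic content.
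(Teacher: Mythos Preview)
Your approach is exactly the one the paper intends: the corollary carries no separate proof in the paper and is obtained by substituting the indicated values of $t_0$, $x$, $t_1$ into \eqref{eq2.9} and simplifying, which is precisely what you do. One caution on item~(3): if you actually carry out the arithmetic you describe, $2\cdot\bigl((b-a)/2\bigr)^{1-1/p}\bigl((b-a)/4\bigr)^n = (b-a)^{n+1-1/p}/2^{\,2n-1/p}$, so the printed exponent $2^{\,n-1/p}$ appears to be a typo in the paper rather than something your bookkeeping should reproduce; similarly, in the symmetric sub-case of item~(2) your observation that the two brackets coincide is correct, whereas the paper's displayed second bracket $\lvert y-(a+3b)/4\rvert$ seems to be another misprint for $\lvert y-(3a+b)/4\rvert$.
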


\section{The dual assumptions}
In this section, $L^p$-error estimates of Two-point quadrature
rules for the Riemann--Stieltjes integral $\int_a^b {f\left( t
\right)du\left( t
    \right)}$, where the integrand $f$ is  of bounded variation on $[a,b]$ and the integrator $u$ is assumed to be satisfied the H\"{o}lder condition on $[a,b]$.
\begin{theorem}\label{thm4}
Let $u:[a,b]\rightarrow \mathbb{R}$ be a H\"{o}lder continuous of
order $r$, $(0 < r  \le 1)$, and $f:[a,b]\rightarrow \mathbb{R}$
is a mapping of bounded $p$-variation $(1\le p \le \infty)$ on
$[a, b]$. Then we have the inequality
\begin{multline}
\label{eq3.1}\left|
{\mathcal{R}^{[a,b]}\left(f,u;t_0,x,t_1\right)} \right|
\\
\le H \max \left\{ {\left( {t_0-a} \right), \left[ {\frac{{t_1 -
t_0}}{2} + \left| {x - \frac{{t_0 + t_1}}{2}} \right|}
\right],\left( {b-t_1} \right) } \right\}^r \cdot
\bigvee_a^b\left( {f;p} \right)
\end{multline}
for all  $a\le t_0 \le x \le t_1 \le b$. Furthermore, the constant
$1$ is the best possible in the sense that it cannot be replaced
by a smaller one, for all $r \in (0,1]$.
\end{theorem}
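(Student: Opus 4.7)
The plan is to dualize the proof of Theorem \ref{thm1}. I start from the same identity used there,
\begin{align*}
\mathcal{R}^{[a,b]}(f,u;t_0,x,t_1) = \int_a^x [f(s)-f(t_0)]\,du(s) + \int_x^b [f(s)-f(t_1)]\,du(s),
\end{align*}
obtained by writing $[u(x)-u(a)]f(t_0) = \int_a^x f(t_0)\,du(s)$ and likewise for the right-hand piece. In Theorem \ref{thm1} the integrator $u$ was of bounded $p$-variation and one could apply \eqref{eq2.5} directly; in the present dual setting the roles are reversed, so before invoking Lemma \ref{lemma1} I need to rewrite each integral in a form where a difference $u(\cdot)-u(c)$ plays the role of the (continuous) integrand and $f$ plays the role of the (bounded-$p$-variation) integrator.

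To achieve this I would split each of the two pieces at $t_0$ and at $t_1$, respectively, and apply Riemann--Stieltjes integration by parts on each of the resulting four subintervals, writing $du(s)=d[u(s)-c]$ with a carefully chosen anchor $c$: on $[a,t_0]$ take $c=u(a)$, on $[t_0,x]$ and $[x,t_1]$ take $c=u(x)$, and on $[t_1,b]$ take $c=u(b)$. In every case one factor of the boundary product $[f(s)-f(t_i)][u(s)-c]$ vanishes at each endpoint, so both boundary contributions die and what remains is
\begin{align*}
\mathcal{R}^{[a,b]}(f,u;t_0,x,t_1) &= -\int_a^{t_0}[u(s)-u(a)]\,df(s) - \int_{t_0}^{x}[u(s)-u(x)]\,df(s)\\
&\quad -\int_{x}^{t_1}[u(s)-u(x)]\,df(s) - \int_{t_1}^{b}[u(s)-u(b)]\,df(s).
\end{align*}
Existence of each of these four $\mathcal{RS}$-integrals is ensured by Lemma \ref{lemma1}, since each integrand is continuous (as $u$ is) and $f$ is of bounded $p$-variation.

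Now I would apply Lemma \ref{lemma1} together with the $r$-Hölder property of $u$: $\sup_{s\in[a,t_0]}|u(s)-u(a)|\le H(t_0-a)^r$, and similarly on the other three subintervals. This produces
\begin{align*}
|\mathcal{R}^{[a,b]}(f,u;t_0,x,t_1)| &\le H(t_0-a)^r \bigvee\nolimits_a^{t_0}(f;p) + H(x-t_0)^r \bigvee\nolimits_{t_0}^{x}(f;p)\\
&\quad + H(t_1-x)^r \bigvee\nolimits_x^{t_1}(f;p) + H(b-t_1)^r \bigvee\nolimits_{t_1}^{b}(f;p).
\end{align*}
Factoring out the largest $H(\cdot)^r$ and dominating the remaining sum of $p$-variations by $\bigvee_a^b(f;p)$, exactly as in the closing step of the proof of Theorem \ref{thm1}, collapses the right-hand side to the bound \eqref{eq3.1} once one uses the elementary identity $\max\{x-t_0,\,t_1-x\}=\tfrac{t_1-t_0}{2}+|x-\tfrac{t_0+t_1}{2}|$.

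For the sharpness of the constant $1$, I would mirror the witnesses of Theorem \ref{thm1} with the roles of $f$ and $u$ swapped: take $u(t)=t^r$ on $[0,1]$, for which the Hölder constant is $H=1$, and let $f$ be a single-jump step function of unit $p$-variation, placing the jump near whichever endpoint ($a$, $x$, or $b$) should force the corresponding slot of the outer maximum to be binding. Extreme choices of $t_0,x,t_1$ that collapse two of the three slots to zero and leave one active should reduce \eqref{eq3.1} to the inequality $1\le C$ for each prospective replacement constant $C$. The main obstacle is the integration-by-parts bookkeeping — verifying simultaneously at both endpoints of each of the four subintervals that the chosen anchor kills the boundary term; once that is in place the Hölder estimate plugs in mechanically, and the sharpness witnesses are structurally parallel to those of Theorem \ref{thm1}, requiring only routine adaptation.
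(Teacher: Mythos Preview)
Your proposal is correct and follows essentially the same route as the paper. The only cosmetic difference is in how you reach the key identity: you start from the two-piece representation of Theorem~\ref{thm1} and integrate by parts on four subintervals $[a,t_0]$, $[t_0,x]$, $[x,t_1]$, $[t_1,b]$, whereas the paper writes the three-term integration-by-parts identity \eqref{eq3.2} directly on $[a,t_0]$, $[t_0,t_1]$, $[t_1,b]$; your two middle pieces $\int_{t_0}^{x}[u(s)-u(x)]\,df(s)$ and $\int_{x}^{t_1}[u(s)-u(x)]\,df(s)$ simply merge into the paper's single $\int_{t_0}^{t_1}[u(s)-u(x)]\,df(s)$ term, after which the H\"older estimates, the max-factoring via $\max\{x-t_0,\,t_1-x\}=\tfrac{t_1-t_0}{2}+\bigl|x-\tfrac{t_0+t_1}{2}\bigr|$, and the sharpness witnesses ($u(t)=t^r$, $f$ a unit-jump step function) proceed identically to the paper's argument.
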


\begin{proof}
Using the integration by parts formula for Riemann--Stieltjes
integral, we have
\begin{align*}
\int_a^{t_0 } {\left[ {u\left( s \right) - u\left( a \right)}
\right]df\left( s \right)}  &= \left[ {u\left( {t_0 } \right) -
u\left( a \right)} \right]f\left( {t_0 } \right) - \int_a^{t_0 }
{f\left( s \right)du\left( s \right)}
\\
\int_{t_0 }^{t_1 } {\left[ {u\left( s \right) - u\left( x \right)}
\right]df\left( s \right)}  &= \left[ {u\left( {t_1 } \right) -
u\left( x \right)} \right]f\left( {t_1 } \right) - \left[ {u\left(
{t_0 } \right) - u\left( x \right)} \right]f\left( {t_0 } \right)
- \int_{t_0 }^{t_1 } {f\left( s \right)du\left( s \right)}
\\
\int_{t_1 }^b {\left[ {u\left( s \right) - u\left( b \right)}
\right]df\left( s \right)}  &= \left[ {u\left( b \right) - u\left(
{t_1 } \right)} \right]f\left( {t_1 } \right) - \int_{t_1 }^b
{f\left( s \right)du\left( s \right)},
\end{align*}
Adding these identities, we get
\begin{align}
\int_a^{t_0 } {\left[ {u\left( s \right) - u\left( a \right)}
\right]df\left( s \right)}  + \int_{t_0 }^{t_1 } {\left[ {u\left(
s \right) - u\left( x \right)} \right]df\left( s \right)}  +
\int_{t_1 }^b {\left[ {u\left( s \right) - u\left( b \right)}
\right]df\left( s \right)}
\nonumber\\
= \left[ {u\left( x \right) - u\left( a \right)} \right]f\left(
{t_0 } \right) + \left[ {u\left( b \right) - u\left( x \right)}
\right]f\left( {t_1 } \right) - \int_a^b {f\left( s
\right)du\left( s \right)}\label{eq3.2}
\end{align}
Applying the triangle inequality on the above identity and then
use Lemma \ref{lemma1}, for each term separately, we get
\begin{align}
&\left| {\left[ {u\left( x \right) - u\left( a \right)}
\right]f\left( {t_0 } \right) + \left[ {u\left( b \right) -
u\left( x \right)} \right]f\left( {t_1 } \right) - \int_a^b
{f\left( s \right)du\left( s \right)}} \right|
\nonumber\\
&\left| {\int_a^{t_0 } {\left[ {u\left( s \right) - u\left( a
\right)} \right]df\left( s \right)}  + \int_{t_0 }^{t_1 } {\left[
{u\left( s \right) - u\left( x \right)} \right]df\left( s \right)}
+ \int_{t_1 }^b {\left[ {u\left( s \right) - u\left( b \right)}
\right]df\left( s \right)}} \right|
\nonumber\\
&\le \left| {\int_a^{t_0 } {\left[ {u\left( s \right) - u\left( a
\right)} \right]df\left( s \right)} } \right| + \left| {\int_{t_0
}^{t_1 } {\left[ {u\left( s \right) - u\left( x \right)}
\right]df\left( s \right)}} \right|+\left| {\int_{t_1 }^b {\left[
{u\left( s \right) - u\left( b \right)} \right]df\left( s
\right)}} \right|
\nonumber\\
&\le \mathop {\sup }\limits_{s \in \left[ {a,t_0} \right]} \left|
{u\left( {s} \right) - u\left( a \right)} \right| \cdot
\bigvee_a^{t_{0}}\left( {f;p} \right)+\mathop {\sup }\limits_{s
\in \left[ {t_0,t_1} \right]} \left| {u\left( {s} \right) -
u\left( x \right)} \right| \cdot \bigvee_{t_{0}}^{t_{1}}\left(
{f;p} \right)\label{eq3.3}
\\
&\qquad+ \mathop {\sup }\limits_{s \in \left[ {t_1,b} \right]}
\left| {u\left( {t_1} \right) - u\left( b \right)} \right| \cdot
\bigvee_{t_1}^b\left( {f;p} \right).\nonumber
\end{align}
As $u$ is of $r$-$H$--H\"{o}lder type, we have
\begin{align*}
\mathop {\sup }\limits_{s \in \left[ {a,t_0} \right]} \left|
{u\left( {s} \right) - u\left( a \right)} \right| \le \mathop
{\sup }\limits_{s \in \left[ {a,t_0} \right]}  \left[ {H\left| {s-
a} \right|^r } \right] = H\left( {t_0-a} \right)^r,
\end{align*}
\begin{align*}
\mathop {\sup }\limits_{s \in \left[ {t_0,t_1} \right]}  \left|
{u\left( {s} \right) - u\left( x \right)} \right| &\le \mathop
{\sup }\limits_{s \in \left[ {t_0,t_1} \right]}\left[ {H\left| {s
- x} \right|^r } \right]
\\
&= H\max \left\{ {\left( {t_1-x} \right)^r ,\left( {x-t_0}
\right)^r } \right\}
\\
&= H\left[ {\max \left\{ {\left( {t_1 - x} \right),\left( {x -
t_0} \right)} \right\}} \right]^r
\\
&= H\left[ {\frac{{t_1 - t_0}}{2} + \left| {x - \frac{{t_0 +
t_1}}{2}} \right|} \right]^r,
\end{align*}
and
\begin{align*}
\mathop {\sup }\limits_{s \in \left[ {t_1,b} \right]}\left|
{u\left( {s} \right) - u\left( b \right)} \right| \le \mathop
{\sup }\limits_{s \in \left[ {t_1,b} \right]}\left[ {H\left| {s-
b} \right|^r } \right] = H\left( {b-t_1} \right)^r,
\end{align*}
Therefore, by \eqref{eq3.3}, we have
\begin{align*}
&\left| {\left[ {u\left( x \right) - u\left( a \right)}
\right]f\left( {t_0 } \right) + \left[ {u\left( b \right) -
u\left( x \right)} \right]f\left( {t_1 } \right) - \int_a^b
{f\left( s \right)du\left( s \right)}} \right|
\\
&\le H\left( {t_0-a} \right)^r \cdot \bigvee_a^{t_{0}}\left( {f;p}
\right)+H\left[ {\frac{{t_1 - t_0}}{2} + \left| {x - \frac{{t_0 +
t_1}}{2}} \right|} \right]^r \cdot \bigvee_{t_{0}}^{t_{1}}\left(
{f;p} \right)+H\left( {b-t_1} \right)^r\cdot \bigvee_{t_1}^b\left(
{f;p} \right)
\\
&\le H \max \left\{ {\left( {t_0-a} \right)^r, \left[ {\frac{{t_1
- t_0}}{2} + \left| {x - \frac{{t_0 + t_1}}{2}} \right|}
\right]^r,\left( {b-t_1} \right)^r } \right\} \cdot
\bigvee_a^b\left( {f;p} \right)
\\
&= H \max \left\{ {\left( {t_0-a} \right), \left[ {\frac{{t_1 -
t_0}}{2} + \left| {x - \frac{{t_0 + t_1}}{2}} \right|}
\right],\left( {b-t_1} \right) } \right\}^r \cdot
\bigvee_a^b\left( {f;p} \right).
\end{align*}
To prove the sharpness of the constant $1$ for any $r \in (0,1]$,
assume that \eqref{eq3.1} holds with a constant $C > 0$, that is,
\begin{multline}
\left| {\left[ {u\left( x \right) - u\left( a
        \right)} \right]f\left( {t_0 } \right) + \left[ {u\left( b \right)
        - u\left( x \right)} \right]f\left( {t_1 } \right) - \int_a^b
    {f\left( s \right)du\left( s \right)}} \right|
\\
\le  C \max \left\{ {\left( {t_0  - a} \right), \left( {
\frac{t_1-t_0}{2} + \left| {x - \frac{{t_0  + t_1 }}{2}} \right|}
    \right), \left( {b - t_1 } \right)} \right\}^r \cdot
\bigvee_{a}^b\left( f;p \right).\label{eq3.4}
\end{multline}
Choose $u(t)= t^r$, $r \in (0,1]$, $t \in [0,1]$ and $f : [0,1]
\to [0,\infty)$ given by
\begin{align*}
f\left( t \right) = \left\{ \begin{array}{l}
0,\,\,\,\,\,\,\,\,\,\,\,t \in \left( {0,1} \right] \\
\\
 1,\,\,\,\,\,\,\,t = 0 \\
\end{array} \right.
\end{align*}
As
\begin{align*}
\left| {u\left( x \right) - u\left( y \right)} \right| = \left|
{x^r  - y^r } \right| \le \left| {x - y} \right|^r, \,\, \forall x
\in [0,1], \,\,r \in (0,1],
\end{align*}
it follows that $u$ is $r$-$H$-H\"{o}lder type with the constant
$H = 1$.

By using the integration by parts formula for Riemann-Stieltjes
integrals, we have:
\begin{align*}
\int_0^1 {f\left( t \right)du\left( t \right)}  = f\left( 1
\right)u\left( 1 \right) - f\left( 0 \right)u\left( 0 \right) -
\int_0^1 {u\left( t \right)df\left( t \right)}=   0,
\end{align*}
and $\bigvee_0^1(f;p) = 1$. Consequently, by \eqref{eq3.4}, we get
\begin{align*}
\left| {t_0^r} \right| \le  C \max \left\{ { t_0  , \left(
{\frac{t_1-t_0}{2} + \left| {x - \frac{{t_0  + t_1 }}{2}} \right|}
    \right), \left( {1 - t_1 } \right)} \right\}^r ,
\,\,\, \forall t_0,t_1\in \left[ {0,1} \right], \,\,
{\rm{with}\,\,t_0\le t_1}.
\end{align*}
Assume first
\begin{align*}
\max \left\{ {t_0  ,\left( {\frac{{t_1
                - t_0 }}{2 } + \left| {x - \frac{{t_0  + t_1 }}{2}} \right|}
    \right),\left( {1 - t_1 } \right)} \right\}^r = t_0^r
\end{align*}
so that we get $1 \le C $.

Now, assume that
\begin{align*}
\max \left\{ {t_0  ,\left( {\frac{{t_1
                - t_0 }}{2 } + \left| {x - \frac{{t_0  + t_1 }}{2}} \right|}
    \right),\left( {1 - t_1 } \right)} \right\}^r = \left( {1 - t_1 } \right)^r.
\end{align*}
choose $t_1 = 1-t_0 $, so that we get $1 \le C $.

Finally, we assume that
\begin{align*}
\max \left\{ {t_0  ,\left( {\frac{{t_1
                - t_0 }}{2 } + \left| {x - \frac{{t_0  + t_1 }}{2}} \right|}
    \right),\left( {1 - t_1 } \right)} \right\}^r = \left( {\frac{{t_1
            - t_0 }}{2 } + \left| {x - \frac{{t_0  + t_1 }}{2}} \right|}
\right)^r.
\end{align*}
Define  $f : [0,1] \to [0,\infty)$ given by
\begin{align*}
f\left( t \right) = \left\{ \begin{array}{l}
0,\,\,\,\,\,\,\,\,\,\,\,t \in \left(  {0,1} \right) \\
\\
1,\,\,\,\,\,\,\,t = 0,1 \\
\end{array} \right.
\end{align*}
Clearly,  $\bigvee_0^1(f;p) = 2$. Therefore,  for $t_0 = 0$ and
$t_1 =1$, so that we get $1 \le C
\left(\frac{1}{2}+\left|x-\frac{1}{2}\right|\right)^r 2^{1/p}$.
Choosing $x=\frac{1}{2}$ and $r=\frac{1}{p}$ or $p=\frac{1}{r}$,
it follows that $1 \le C \left(\frac{1}{2}  \right)^r 2^{r}$,
i.e., $C\ge1$. Hence, the inequality  \eqref{eq3.1}
 is sharp, and the theorem is
completely proved.

\end{proof}

\begin{theorem}
\label{thm5}    Let $1 \le p < \infty$. Let $f,u : [a,b] \to
\mathbb{R}$ be such
    that is $u \in L^p[a,b]$ and $f$ has a Lipschitz property on
    $[a,b]$. If $u$ is $r$-$H$--H\"{o}lder continuous, then the inequality
    \begin{multline}
     \left| {\mathcal{R}^{[a,b]}\left(f,u;t_0,x,t_1\right)} \right|      \\
      \le  L H       \left\{ \begin{array}{l}
     \frac{{\left( {t_0  - a} \right)^{r + 1} }}{{\left( {rp + 1} \right)^{\frac{1}{p}} }} + \left( {t_1  - t_0 } \right)^{1 - \frac{1}{p}} \left( {\frac{{\left( {t_1  - x} \right)^{rp + 1}  - \left( {t_0  - x} \right)^{rp + 1} }}{{rp + 1}}} \right)^{\frac{1}{p}}  + \frac{{\left( {b - t_1 } \right)^{r + 1} }}{{\left( {rp + 1} \right)^{\frac{1}{p}} }},\,\,\,\,\,\,\, a \le x \le t_0 \le t_1 \le b \\
     \\
     \frac{{\left( {t_0  - a} \right)^{r + 1} }}{{\left( {rp + 1} \right)^{\frac{1}{p}} }} + \left( {t_1  - t_0 } \right)^{1 - \frac{1}{p}} \left( {\frac{{\left( {x - t_0 } \right)^{rp + 1}  + \left( {t_1  - x} \right)^{rp + 1} }}{{rp + 1}}} \right)^{\frac{1}{p}}  + \frac{{\left( {b - t_1 } \right)^{r + 1} }}{{\left( {rp + 1} \right)^{\frac{1}{p}} }},\,\,\,\,\,\,\, a\le t_0  \le x \le t_1  \le b\\
     \\
     \frac{{\left( {t_0  - a} \right)^{r + 1} }}{{\left( {rp + 1} \right)^{\frac{1}{p}} }} + \left( {t_1  - t_0 } \right)^{1 - \frac{1}{p}} \left( {\frac{{\left( {x - t_0 } \right)^{rp + 1}  - \left( {x - t_1 } \right)^{rp + 1} }}{{rp + 1}}} \right)^{\frac{1}{p}}  + \frac{{\left( {b - t_1 } \right)^{r + 1} }}{{\left( {rp + 1} \right)^{\frac{1}{p}} }},\,\,\,\,\, \,\, a \le t_0 \le t_1  \le x \le b
     \end{array} \right.
    \end{multline}
    holds for all $p>1$ and $r\in \left( 0,1\right]$ with constant $H>0$.
\end{theorem}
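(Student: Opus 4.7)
The plan is to mirror the strategy of the proof of Theorem \ref{thm4}, but to swap the roles of the two hypotheses on the integrand and integrator: now $f$ is Lipschitz while $u$ is $r$-$H$-H\"older, so I will invoke Lemma \ref{lemma2} in place of Lemma \ref{lemma1}. The identity \eqref{eq3.2} already derived in the proof of Theorem \ref{thm4} still applies here verbatim, since it only uses integration by parts and makes no regularity assumption about which of $f$ or $u$ is the smoother function. Thus I would start from
\begin{align*}
-\mathcal{R}^{[a,b]}\!\left(f,u;t_0,x,t_1\right)
&=\int_a^{t_0}\!\left[u(s)-u(a)\right]df(s)+\int_{t_0}^{t_1}\!\left[u(s)-u(x)\right]df(s)\\
&\quad+\int_{t_1}^{b}\!\left[u(s)-u(b)\right]df(s),
\end{align*}
and apply the triangle inequality to pass to three absolute values.

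Next, to each of the three Riemann--Stieltjes integrals I would apply Lemma \ref{lemma2} with $\nu=f$ (Lipschitz with constant $L$) and $w(s)=u(s)-u(\cdot)$, obtaining, for instance,
\[
\left|\int_a^{t_0}\!\left[u(s)-u(a)\right]df(s)\right|
\le L\,(t_0-a)^{1-\tfrac{1}{p}}\left(\int_a^{t_0}\!\left|u(s)-u(a)\right|^{p}ds\right)^{\!1/p}.
\]
The $r$-$H$-H\"older bound $|u(s)-u(a)|\le H|s-a|^{r}$ then reduces the $L^p$-norm to an elementary power integral $\int_a^{t_0}(s-a)^{rp}\,ds=(t_0-a)^{rp+1}/(rp+1)$, and after absorbing the exponents I recover the outer term $LH(t_0-a)^{r+1}/(rp+1)^{1/p}$. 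The analogous argument on $[t_1,b]$ gives the symmetric outer term involving $(b-t_1)^{r+1}$.

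The only place where the three-case split in the statement arises is the middle piece $\int_{t_0}^{t_1}|u(s)-u(x)|^{p}\,ds \le H^{p}\int_{t_0}^{t_1}|s-x|^{rp}\,ds$. The value of the last integral depends on where $x$ sits relative to the subinterval $[t_0,t_1]$. When $a\le x\le t_0\le t_1\le b$ the integrand reduces to $(s-x)^{rp}$ and integrates to $\bigl[(t_1-x)^{rp+1}-(t_0-x)^{rp+1}\bigr]/(rp+1)$; when $a\le t_0\le x\le t_1\le b$ the integral must be split at $x$, producing $\bigl[(x-t_0)^{rp+1}+(t_1-x)^{rp+1}\bigr]/(rp+1)$; and when $a\le t_0\le t_1\le x\le b$ the integrand reduces to $(x-s)^{rp}$ and integrates to $\bigl[(x-t_0)^{rp+1}-(x-t_1)^{rp+1}\bigr]/(rp+1)$. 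Multiplying by the factor $L(t_1-t_0)^{1-1/p}$ coming from Lemma \ref{lemma2} and adding the two outer contributions then yields exactly the three branches of the claimed inequality.

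The main obstacle is not analytic but combinatorial: one must track the three possible positions of $x$ relative to $[t_0,t_1]$ and ensure the absolute value $|s-x|^{rp}$ is integrated on the correct sub-pieces in each case. Once that bookkeeping is done, every step reduces either to a direct application of Lemma \ref{lemma2} or to the $r$-$H$-H\"older estimate for $u$, so no additional ingredient beyond what was already used in Theorem \ref{thm4} is required.
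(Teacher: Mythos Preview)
Your proposal is correct and follows essentially the same approach as the paper: start from the identity \eqref{eq3.2}, apply Lemma \ref{lemma2} to each of the three pieces (using the Lipschitz property of $f$), then invoke the $r$-$H$-H\"older bound on $u$ to reduce each $L^p$-norm to an explicit power integral, with the three-case split arising from the position of $x$ relative to $[t_0,t_1]$ in the middle term. The paper's proof is identical in structure and detail.
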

 \begin{proof}
As in the proof of Theorem \ref{thm4}, we have by Lemma
\ref{lemma2}
 \begin{align*}
&\left| {\left[ {u\left( x \right) - u\left( a \right)}
    \right]f\left( {t_0 } \right) + \left[ {u\left( b \right) -
        u\left( x \right)} \right]f\left( {t_1 } \right) - \int_a^b
    {f\left( s \right)du\left( s \right)}} \right|
\nonumber\\
&=\left| {\int_a^{t_0 } {\left[ {u\left( s \right) - u\left( a
            \right)} \right]df\left( s \right)}  + \int_{t_0 }^{t_1 } {\left[
        {u\left( s \right) - u\left( x \right)} \right]df\left( s \right)}
    + \int_{t_1 }^b {\left[ {u\left( s \right) - u\left( b \right)}
        \right]df\left( s \right)}} \right|
\nonumber\\
&\le \left| {\int_a^{t_0 } {\left[ {u\left( s \right) - u\left( a
            \right)} \right]df\left( s \right)} } \right| + \left| {\int_{t_0
    }^{t_1 } {\left[ {u\left( s \right) - u\left( x \right)}
        \right]df\left( s \right)}} \right|+\left| {\int_{t_1 }^b {\left[
        {u\left( s \right) - u\left( b \right)} \right]df\left( s
        \right)}} \right|
\nonumber\\
&\le L\left[ {\left( {t_0  - a} \right)^{1 - \frac{1}{p}} \left(
{\int_a^{t_0 } {\left| {u\left( s \right) - u\left( a \right)}
\right|^p ds} } \right)^{\frac{1}{p}}  + \left( {t_1  - t_0 }
\right)^{1 - \frac{1}{p}} \left( {\int_{t_0 }^{t_1 } {\left|
{u\left( s \right) - u\left( x \right)} \right|^p ds} }
\right)^{\frac{1}{p}} } \right.
\\
&\qquad\left. { + \left( {b - t_1 } \right)^{1 - \frac{1}{p}}
\left( {\int_{t_1 }^b {\left| {u\left( s \right) - u\left( b
\right)} \right|^p ds} } \right)^{\frac{1}{p}} } \right]
\\
&\le L H\left[ {\left( {t_0  - a} \right)^{1 - \frac{1}{p}} \left(
{\int_a^{t_0 } {\left| {s-a} \right|^{rp} ds} }
\right)^{\frac{1}{p}}  + \left( {t_1  - t_0 } \right)^{1 -
\frac{1}{p}} \left( {\int_{t_0 }^{t_1 } {\left| {s-x} \right|^{rp}
ds} } \right)^{\frac{1}{p}} } \right.
\\
&\qquad\left. { + \left( {b - t_1 } \right)^{1 - \frac{1}{p}}
\left( {\int_{t_1 }^b {\left| {s-b} \right|^{rp} ds} }
\right)^{\frac{1}{p}} } \right].
\end{align*}
Simple computations yield that
\begin{align*}
\int_a^{t_0 } {\left| {s - a} \right|^{rp} ds}  = \int_a^{t_0 }
{\left( {s - a} \right)^{rp} ds}  = \frac{{\left( {t_0  - a}
\right)^{rp + 1} }}{{rp + 1}},
\end{align*}
\begin{align*}
\int_{t_0 }^{t_1 } {\left| {s - x} \right|^{rp} ds}  &= \left\{
\begin{array}{l}
\int_{t_0 }^{t_1 } {\left( {s - x} \right)^{rp} ds}, \,\,\,\,\,\,\,\,\,\,\,\,\,\,\,\,\,\,\,\,\,\,\,\,\,\,\,\,\,\,\,\,\,\,\,\,\,\,\,\,\,\,\,a \le x \le t_0  \\
\\
\int_{t_0 }^x {\left( {x - s} \right)^{rp} ds}  + \int_x^{t_1 } {\left( {s - x} \right)^{rp} ds} ,\,\,\,\,\,\,\,\,t_0  \le x \le t_1  \\
\\
\int_{t_0 }^{t_1 } {\left( {x - s} \right)^{rp} ds} ,\,\,\,\,\,\,\,\,\,\,\,\,\,\,\,\,\,\,\,\,\,\,\,\,\,\,\,\,\,\,\,\,\,\,\,\,\,\,\,\,\,\,\,t_1  \le x \le b \\
\end{array} \right. \\
&= \left\{ \begin{array}{l}
\frac{{\left( {t_1  - x} \right)^{rp + 1}  - \left( {t_0  - x} \right)^{rp + 1} }}{{rp + 1}}\,\,\,\,\,\,\,\,\,\,a \le x \le t_0  \\
\\
\frac{{\left( {x - t_0 } \right)^{rp + 1}  + \left( {t_1  - x} \right)^{rp + 1} }}{{rp + 1}},\,\,\,\,\,\,\,\,t_0  \le x \le t_1  \\
\\
\frac{{\left( {x - t_0 } \right)^{rp + 1}  - \left( {x - t_1 } \right)^{rp + 1} }}{{rp + 1}},\,\,\,\,\,\,\,\,t_1  \le x \le b \\
\end{array} \right.,\\
\end{align*}
and
\begin{align*}
\int_{t_1 }^b {\left| {s - b} \right|^{rp} ds}  = \int_{t_1 }^b
{\left( {b - s} \right)^{rp} ds}  = \frac{{\left( {b - t_1 }
\right)^{rp + 1} }}{{rp + 1}}.
\end{align*}
Combining these equalities with the last inequality above we get
the required result.
\end{proof}

\begin{corollary}
\label{cor5}    Let $1 \le p < \infty$. Let $f,u : [a,b] \to
\mathbb{R}$ be such
    that is $u \in L^p[a,b]$ and $f$ has a Lipschitz property on
    $[a,b]$. If $u$ is $r$-$H$--H\"{o}lder continuous, then the inequality
    \begin{multline}
\label{eq3.6}   \left| {\left( {x-a}
        \right)f\left( {t_0 } \right) + \left( {b-x} \right)f\left( {t_1 } \right) - \int_a^b
        {s^{r-1}f\left( s \right)ds}} \right|    \\
    \le  L H     \left\{ \begin{array}{l}
    \frac{{\left( {t_0  - a} \right)^{r + 1} }}{{\left( {rp + 1} \right)^{\frac{1}{p}} }} + \left( {t_1  - t_0 } \right)^{1 - \frac{1}{p}} \left( {\frac{{\left( {t_1  - x} \right)^{rp + 1}  - \left( {t_0  - x} \right)^{rp + 1} }}{{rp + 1}}} \right)^{\frac{1}{p}}  + \frac{{\left( {b - t_1 } \right)^{r + 1} }}{{\left( {rp + 1} \right)^{\frac{1}{p}} }},\,\,\,\,\,\,\, a \le x \le t_0 \le t_1 \le b \\
    \\
    \frac{{\left( {t_0  - a} \right)^{r + 1} }}{{\left( {rp + 1} \right)^{\frac{1}{p}} }} + \left( {t_1  - t_0 } \right)^{1 - \frac{1}{p}} \left( {\frac{{\left( {x - t_0 } \right)^{rp + 1}  + \left( {t_1  - x} \right)^{rp + 1} }}{{rp + 1}}} \right)^{\frac{1}{p}}  + \frac{{\left( {b - t_1 } \right)^{r + 1} }}{{\left( {rp + 1} \right)^{\frac{1}{p}} }},\,\,\,\,\,\,\, a\le t_0  \le x \le t_1  \le b\\
    \\
    \frac{{\left( {t_0  - a} \right)^{r + 1} }}{{\left( {rp + 1} \right)^{\frac{1}{p}} }} + \left( {t_1  - t_0 } \right)^{1 - \frac{1}{p}} \left( {\frac{{\left( {x - t_0 } \right)^{rp + 1}  - \left( {x - t_1 } \right)^{rp + 1} }}{{rp + 1}}} \right)^{\frac{1}{p}}  + \frac{{\left( {b - t_1 } \right)^{r + 1} }}{{\left( {rp + 1} \right)^{\frac{1}{p}} }},\,\,\,\,\, \,\, a \le t_0 \le t_1  \le x \le b
    \end{array} \right.
    \end{multline}
    holds for all $p>1$ and $r\in \left( 0,1\right]$ with constant $H>0$.
\end{corollary}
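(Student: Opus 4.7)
The plan is to apply Theorem \ref{thm5} with a concretely chosen integrator $u$ that converts the abstract $\mathcal{RS}$--integral $\int_a^b f\,du$ into the weighted Riemann integral $\int_a^b s^{r-1}f(s)\,ds$ appearing in \eqref{eq3.6}. This parallels the passage from Theorem \ref{thm1} to Corollary \ref{cor4}, which specialized the integrator to a primitive of a given weight.

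First I would set $u(t) = t^r/r$ on $[a,b]$ (assuming $0 \le a$, which is implicit from the appearance of $s^{r-1}$). The elementary inequality
\begin{align*}
|x^r - y^r| \le |x-y|^r, \qquad x,y \ge 0,\ 0 < r \le 1,
\end{align*}
shows that this $u$ satisfies an $r$-$H$-H\"older condition with explicit constant $H = 1/r$, so the $r$-H\"older hypothesis of Theorem \ref{thm5} is met. Continuity on $[a,b]$ also gives $u \in L^p[a,b]$ automatically, and $f$ is Lipschitz by assumption; thus all hypotheses of Theorem \ref{thm5} are in force.

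Second, since $u$ is $C^1$ on $[a,b]$ (in the case $a>0$; the case $a=0$ follows by an elementary limiting argument using $r>0$), the standard identity for the Riemann--Stieltjes integral with a smooth integrator gives
\begin{align*}
\int_a^b f(s)\,du(s) = \int_a^b u'(s)\, f(s)\,ds = \int_a^b s^{r-1}f(s)\,ds,
\end{align*}
while the boundary increments reduce to $u(x) - u(a) = (x^r - a^r)/r$ and $u(b) - u(x) = (b^r - x^r)/r$. Plugging these identifications into the three-case upper bound of Theorem \ref{thm5} yields precisely the right-hand side of \eqref{eq3.6}, with the constant $L$ carried over from the Lipschitz constant of $f$ and the constant $H$ absorbed from the H\"older bound for $u$.

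The main delicate point I expect is a cosmetic one: Theorem \ref{thm5} produces $(x^r - a^r)/r$ and $(b^r - x^r)/r$ as the coefficients of $f(t_0)$ and $f(t_1)$, whereas Corollary \ref{cor5} writes $(x-a)$ and $(b-x)$. I would interpret this as a normalization/typographical artifact in the statement and keep the rigorously derived boundary weights. Once that is settled the proof is essentially a substitution, with no further estimation required beyond Theorem \ref{thm5} itself.
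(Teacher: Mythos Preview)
Your approach is essentially the same as the paper's: the paper simply sets $u(t)=t^r$ in Theorem~\ref{thm5} and declares the result, so your choice $u(t)=t^r/r$ differs only by an inessential normalization. You are also right that the coefficients $(x-a)$ and $(b-x)$ in the stated corollary do not literally arise from this substitution (one gets $x^r-a^r$ and $b^r-x^r$ instead), so the discrepancy you flagged is indeed a typographical artifact of the paper rather than a gap in your argument.
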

\begin{proof}
Setting $u(t)=t^r$, $t\in [a,b]$, $r\in (0,1]$,  in Theorem
\ref{thm5} we get the required result.
\end{proof}

\begin{corollary}
    Let $1 \le p < \infty$. Let $f,u : [a,b] \to \mathbb{R}$ be such
    that is $u \in L^p[a,b]$ and $f$ has a Lipschitz property on
    $[a,b]$. If $u$ is $K$-Lipschitz continuous on $[a,b]$, then the inequality
    \begin{multline}
\label{eq3.7}   \left| {\left( {x-a}
        \right)f\left( {t_0 } \right) + \left( {b-x} \right)f\left( {t_1 } \right) - \int_a^b
        {f\left( s \right)ds}} \right|   \\
    \le  L K     \left\{ \begin{array}{l}
    \frac{{\left( {t_0  - a} \right)^{2} }}{{\left( {p + 1} \right)^{\frac{1}{p}} }} + \left( {t_1  - t_0 } \right)^{1 - \frac{1}{p}} \left( {\frac{{\left( {t_1  - x} \right)^{p + 1}  - \left( {t_0  - x} \right)^{p + 1} }}{{p + 1}}} \right)^{\frac{1}{p}}  + \frac{{\left( {b - t_1 } \right)^{2} }}{{\left( {p + 1} \right)^{\frac{1}{p}} }},\,\,\,\,\,\,\, a \le x \le t_0 \le t_1 \le b \\
    \\
    \frac{{\left( {t_0  - a} \right)^{2} }}{{\left( {p + 1} \right)^{\frac{1}{p}} }} + \left( {t_1  - t_0 } \right)^{1 - \frac{1}{p}} \left( {\frac{{\left( {x - t_0 } \right)^{p + 1}  + \left( {t_1  - x} \right)^{p + 1} }}{{p + 1}}} \right)^{\frac{1}{p}}  + \frac{{\left( {b - t_1 } \right)^{2} }}{{\left( {p + 1} \right)^{\frac{1}{p}} }},\,\,\,\,\,\,\, a\le t_0  \le x \le t_1  \le b\\
    \\
    \frac{{\left( {t_0  - a} \right)^{2} }}{{\left( {p + 1} \right)^{\frac{1}{p}} }} + \left( {t_1  - t_0 } \right)^{1 - \frac{1}{p}} \left( {\frac{{\left( {x - t_0 } \right)^{rp + 1}  - \left( {x - t_1 } \right)^{p + 1} }}{{rp + 1}}} \right)^{\frac{1}{p}}  + \frac{{\left( {b - t_1 } \right)^{2} }}{{\left( {p + 1} \right)^{\frac{1}{p}} }},\,\,\,\,\, \,\, a \le t_0 \le t_1  \le x \le b
    \end{array} \right.
    \end{multline}
    holds for all $p>1$ and   constant $K>0$.
\end{corollary}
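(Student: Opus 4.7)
This last corollary is nothing more than the $r=1$ specialization of the Lipschitz-integrator results already established, the point being that a $K$-Lipschitz map is exactly a $1$-$K$-Hölder map. The plan is therefore to apply either Theorem \ref{thm5} or Corollary \ref{cor5} with the parameter identification $r \leftarrow 1$ and $H \leftarrow K$, and verify that the resulting bound reduces term-by-term to the right-hand side of \eqref{eq3.7}.

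Substituting $r=1$ into the bound of \eqref{eq3.6} produces uniform simplifications across all three piecewise regimes: the outer exponent $r+1$ appearing in $(t_0-a)^{r+1}$ and $(b-t_1)^{r+1}$ collapses to $2$; the quantity $rp+1$ in both the denominator $(rp+1)^{1/p}$ and the middle-term numerators collapses to $p+1$; the factor $(t_1-t_0)^{1-1/p}$ carries over unchanged, as does the outer Lipschitz constant $L$ coming from the hypothesis on $f$. Relabeling $H$ as $K$ then matches each of the three piecewise bounds displayed in \eqref{eq3.7}. The reduction of the left-hand side is automatic: at $r=1$ we have $s^{r-1}=1$, so the weighted integral $\int_a^b s^{r-1}f(s)\,ds$ appearing in \eqref{eq3.6} simplifies to $\int_a^b f(s)\,ds$, and the weights $(x^r-a^r)$, $(b^r-x^r)$ simplify to $(x-a)$, $(b-x)$, matching the displayed left-hand side.

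There is no substantive analytic obstacle, since all the hard work — the application of Lemma \ref{lemma2}, the integration-by-parts identity \eqref{eq3.2}, and the case split on the position of $x$ relative to $t_0, t_1$ — has already been carried out inside Theorem \ref{thm5} and Corollary \ref{cor5}. The proof is a one-line bookkeeping specialization. The only item worth flagging to the reader is a residual exponent $rp+1$ appearing in the third case of \eqref{eq3.7} which, upon substituting $r=1$, should also read $p+1$ for consistency with the other two cases; this is a transcription slip rather than a genuine discrepancy, and correcting it makes the three cases manifestly uniform in form.
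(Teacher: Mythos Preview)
Your proposal is correct and follows exactly the paper's approach: the paper's proof is the single line ``Setting $r=1$ in Corollary \ref{cor5}, we get the required result,'' and you have spelled out precisely this specialization, including correctly flagging the residual $rp+1$ in the third case as a transcription slip. One minor inaccuracy: in \eqref{eq3.6} the weights are already written as $(x-a)$ and $(b-x)$, not $(x^r-a^r)$ and $(b^r-x^r)$ as you state, so that part of the reduction is trivial.
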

\begin{proof}
    Setting $r=1$c  in Corollary \ref{cor5}, we get the required result.
\end{proof}

\begin{remark}
    The inequalities \eqref{eq3.6} and \eqref{eq3.7} generalize the recent  result(s) in \cite{A}.
\end{remark}

\centerline{}

\centerline{}

\end{document}